\newcommand{\mc}[1]{\mathcal{#1}}
\newcommand{\te}[1]{\mathrm{#1}}
\newcommand{\field}[1]{{\mathbb{#1}}}
\newcommand{\be}{\begin{equation}}
\newcommand{\ee}{\end{equation}}
\newcommand{\bea}{\begin{eqnarray}}
\newcommand{\eea}{\end{eqnarray}}
\newcommand{\ba}{\begin{array}}
\newcommand{\ea}{\end{array}}
\newcommand{\beas}{\begin{eqnarray*}}
\newcommand{\eeas}{\end{eqnarray*}}
\newcommand{\leftm}{\left[\begin{array}}
\newcommand{\rightm}{\end{array}\right]}
\newtheorem{thm}{\bf Theorem}[section]
\newtheorem{definition}{\it Definition}[section]
\DeclareMathOperator*{\argmin}{arg\,min}
\DeclareMathOperator*{\argmax}{\arg\!\max}
\title{ Distributed Submodular Minimization And Motion Coordination Over Discrete State Space}
\author{Hassan Jaleel and Jeff S. Shamma\thanks{Center for Electrical and Mechanical Science and Engineering, King Abdullah University of Science and Technology, Thuwal, KSA. Email: hassan.jaleel@kaust.edu.sa,
jeff.shamma@kaust.edu.sa.}}
\begin{document}
\maketitle
%----------------------------
\begin{abstract}
We develop a framework for the distributed minimization of submodular functions. Submodular functions are a discrete analog of convex functions and are extensively used in large-scale combinatorial optimization problems. While there has been significant interest in the distributed formulations of convex optimization problems, distributed minimization of submodular functions has received relatively little research attention. Our framework relies on an equivalent convex reformulation of a submodular minimization problem, which is efficiently computable. We then use this relaxation to exploit methods for the distributed optimization of convex functions. 
The proposed framework is  applicable to submodular set functions as well as to a wider class of submodular functions defined over certain lattices. We also propose an approach for solving distributed motion coordination problems in discrete state space based on submodular function minimization. We establish through a challenging setup of capture the flag game that submodular functions over lattices can be used to design artificial potential fields over discrete state space in which the agents are attracted towards their goals and are repulsed from obstacles and from each other for collision avoidance. 

%whose minima correspond to  desired behaviors, i.e, agents are attracted towards their goals and are repulsed from obstacles. %Furthermore, we demonstrate that the proposed distributed framework can be employed effectively for generating feasible trajectories for such motion coordination problems.

\end{abstract}
\section{Introduction}
Submodular functions play a similar role in combinatorial optimization as convex functions play in continuous optimization. These functions can be minimized efficiently in polynomial time using combinatorial or subgradient methods (see e.g. \cite{mccormick2005} and the references therein). Therefore, submodular functions have numerous applications in matroid theory, facility location, min-cut problems, economies of scales, and coalition formation (see e.g., \cite{edmonds1970}, \cite{topkis1978}, \cite{topkis1979}, and \cite{topkis2011}). Sumodular functions can also be maximized approximately, which has applications in resource allocation and welfare problem  \cite{vondrak2008} and \cite{marden2013a}, large scale machine learning problems \cite{bach2010} and \cite{stobbe2010}, controllability of complex networks \cite{summers2016} and \cite{clark2016}, influence maximization \cite{chen2009} and \cite{borgs2014}, and  utility design for multiagent systems \cite{marden2013}. 

Unlike convex optimization and submodular maximization for which efficient distributed algorithms exist in the literature (see e.g., \cite{nedic2010_1}, \cite{mirzasoleiman2013}, \cite{chierichetti2010}, \cite{blelloch2011}, and \cite{lattanzi2011}), distributed minimization of submodular functions has received relatively little research attention. Moreover, most of the existing literature on submodular optimization focuses on submodular set functions, which are defined over all the subsets of a base set. However, we are interested in a wider class of submodular functions defined over ordered lattices, which are products of a finite number of totally ordered sets.
%. A lattice is a partially ordered set that  contains the unique supremum and infimum of every pair of its elements. In particular, we are interested in lattices that are product of a finite number of totally ordered sets, which we will refer to as ordered lattices. 

Thus, we propose a framework for the distributed minimization of submodular functions defined over ordered lattices. The enabler in the proposed framework is a particular continuous extension that extends any function defined over an ordered lattice to the set of probability measures. %This extension from \cite{bach2015} is based on the idea of Lov{\'a}sz extension of set functions \cite{lovasz1983}, and can be computed in polynomial time through a simple greedy algorithm. 
This extension, which was presented in \cite{bach2015}, is a generalization of the idea of Lov{\'a}sz extension for set functions \cite{lovasz1983}, and can be computed in polynomial time through a simple greedy algorithm. The key feature of this extension is that the extended function is convex on the set of probability measures if and only if the original function is  submodular. Furthermore, minimizing the extended function over the set of probability measures and minimizing the original function over an ordered lattice are equivalent if and only if the original function is submodular.  

In the proposed framework, we first formulate an equivalent convex optimization problem for a given submodular minimization problem by employing the continuous extension in \cite{bach2015}. After formulating an equivalent convex optimization problem, we propose to implement any efficient distributed optimization algorithm for non-smooth convex functions. {This novel combination of convex reformulation of a submodular minimization problem and distributed convex optimization enables us to solve a submodular minimization problem exactly in polynomial time in a distributed manner.}

Distributed convex optimization is an active area of research  and numerous approaches exist in the literature for the distributed minimization of convex functions (see e.g. \cite{mota2013}, \cite{boyd2011}, \cite{nedic2009}, and \cite{nedic2010}). In the proposed framework, we employ the projected subgradient based algorithm presented in \cite{nedic2010}. This algorithm is well suited for the proposed framework because the subgradient of the continuous extension of a submodular function is already computed as a by product of the greedy algorithm. In the projected subgradient based algorithm, each agent maintains a local estimate of the global optimal solution. An agent is only required to communicate with a subset of the other nodes in the network for information mixing. However, through this local communication and an update in the descent direction of a local subgradient, the algorithm asymptotically drives the estimates of all the agents to the global optimal solution. 

In addition to a distributed framework for submodular minimization, we establish that submodular functions over ordered lattices have an important role in distributed motion coordination over discrete domains. Typically, motion coordination problems under uncertainties are computationally complex. In multiagent systems, the size of the problem increases exponentially with the number of agents, which further increases the complexity of the problem. One approach for handling computational complexity and uncertainties in motion planning is based on artificial potential functions combined with receding horizon control (see \cite{lavalle2006} and the references therein for details).  

We demonstrate that we can design desired potential functions for motion coordination using submodular functions over an ordered lattice. The benefit of designing submodular potential functions is that we have a well established theory of submodular optimization for efficiently minimizing these functions \cite{stobbe2010}, \cite{schrijver2000}, \cite{iwata2001}, and \cite{jegelka2011}. To validate our claim, we consider a version of capture the flag game from \cite{D'Andrea2003} and \cite{chasparis2008}, which is played between two teams called offense and defense. This game is selected because it has a complex setup with both collaborative and adversarial components. Moreover, it offers a variety of challenges involved in multiagent motion coordination. We formulate the problem from the perspective of defense team under the framework of receding horizon control with one step prediction horizon.  

For this game, we design potential functions that generate attractive forces between defenders for cohesion and go to goal behaviors. We also design potential functions that generate repulsive forces for obstacle avoidance and collision avoidance among the members of the defense team. We prove that these potential functions are submodular over the set of decision variables and the problem is a submodular minimization problem. Thus, at each decision time, our proposed framework can exactly solve this problem in polynomial time in a distributed manner. Finally, we show through simulations that the proposed framework for the distributed minimization of submodular functions generates feasible actions for the defense team. Based on the generated actions, the defenders can effectively defend the defense zone while avoiding collisions and obstacles.   

\section{Preliminaries}
\subsection{Notations}\label{subsec:Notations}
Let ${S} = \{s_0,s_1,\ldots,s_{m-1}\}$ be a finite set with cardinality $|{S}|$ and indexed by $\field{Z}_+$, where $\field{Z}_+$ is the set of non-negative integers. A function $f$ is a real valued set function on ${S}$ if for each $A\subseteq {S}$, $f(A) \in \field{R}$. We represent a vector $\te{x} \in \field{R}^n$ as $\te{x} = (x_0,x_1,\ldots,x_{n-1})$. We refer to its $i^{\te{th}}$ component by $\te{x}(i)$, its dimension by $|\te{x}|$, and its Euclidean norm by $\Vert x\Vert$. 
%Each vector $\te{x}$ can also be represented from the perspective of the $i^{\te{th}}$ component as
%$\te{x} = (x_i,\te{x}_{-i})$, where
%\[
%\te{x}_{-i} = (x_0,\ldots,x_{i-1},x_{i+1},\ldots,x_{N-1}).  
%   \]
 We define $\{0,1\}^{|S|}$ as a set of all vectors of length $|{S}|$ such that if $\te{x} \in \{0,1\}^{|S|}$, then $\te{x}(i) \in \{0,1\}$ for all $i \in \{0,1,\ldots,|{S}|-1\}$. Similarly, $[0,1]^{|S|}$ is a set of all vectors of length $|{S}|$ such that if $\te{x} \in [0,1]^{|S|}$, then $\te{x}(i) \in [0,1]$ for all $i \in \{0,1,\ldots,|{S}|-1\}$. A unit vector in $\field{R}^n$ is $\te{e}_i$ which is defined as 
\begin{equation}\label{eq:e_i}
\te{e}_{i}(k) = 
\begin{cases}
1 \quad& \text{ if } k = i,\\
0 \quad& \te{otherwise}
\end{cases}
\end{equation}
The indicator vector of a set $A \subseteq {S}$ is $\mathbbm{1}_A$ and is defined as 
\begin{equation}
\mathbbm{1}_A(i) = \begin{cases}
    1       & \quad \text{if } s_i \in A\\
    0   & \quad \text{otherwise}\\
  \end{cases}
\end{equation} 
For any two n-dimensional vectors $\te{x}$ and $\te{y}$, we say that
 $\te{x} \leq \te{y}$ if $x(i) \leq y(i)$ for all $i \in \{0,1,\ldots,n-1\}$. Similarly, 
$\max(\te{x},\te{y})$ and $\min(\te{x},\te{y})$ are vectors in $\field{R}^n$ defined as follows.  
\begin{align*}
\max(\te{x},\te{y}) &= (\max(x(0),y(0)), \ldots,\max(x(n-1),y(n-1)))\\
\min(\te{x},\te{y}) &= (\min(x(0),y(0)),\ldots,\min(x(n-1),y(n-1)))
\end{align*}

 A Partially Ordered Set (POSet) is a set in which the elements are partially ordered with respect to a binary relation ``$\leq$''. Elements $s_i$ and $s_j$ in $S$ are unordered if neither $s_i\leq s_j$ nor $s_j\leq s_i$. A POSet $S$ is a chain if it does not contain any unordered pair, i.e., for any $s_i$ and $s_j$ in $S$, either $s_i \leq s_j$ or $s_j \leq s_i$. 
The supremum and infimum of any pair $s_i$ and $s_j$ are represented as $s_i \vee s_j$ and $s_i \wedge s_j$ respectively. From \cite{topkis1978}, a POSet $S$ is a lattice if for every pair of elements $s_i$ and $s_j$ in $S$
\[
s_i \vee s_j \in S\text{ and } s_i \wedge s_j \in S
\]
%has a unique supremum and infimum, which are also contained in $S$. 
%Thus, a finite chain is always a lattice. %If $S$ and $T$ are finite lattices, then their set product $S \times T$ is also a lattice.

%Let $\mc{X} = \prod_{i = 1}^N S_i$ be a product of $N$ sets where each $S_i$ is an index set with $m_i$ elements. Each $x \in \mc{X}$ is an $N$ dimensional vector where $x(i) \in X_i$. 
%We define $\mc{X} = \prod_{i = 1}^N S_i$ as product of $N$ sets such that each $\te{x} \in \mc{X}$ is a vector with $N$ elements. For each $\te{x} \in \mc{X}$, we define $e(\te{x}) = \prod_{i=1}^N e^{m_i}_{x_i}$, where each $e^{m_i}_{x_i}\in \{0,1\}^{m_i}$ such that 

\subsection{Submodular Set Functions}\label{def:submod1}
Given a set $S$, a function $f$ is a set function if it is defined over all the subsets of $S$. A set function $f:2^{S} \rightarrow \field{R}$ is submodular if and only if for any two subsets of ${S}$, say $A$ and $B$, the following inequality holds
\begin{equation*}\label{eq:submodDef1}
f(A \cap B)+ f(A \cup B) \leq f(A) + f(B)  
\end{equation*}
However, it is convenient to verify submodularity of a set function through the property of diminishing returns, which is as follows. 
\begin{definition}\label{def:submod2}
{\it A function $f:2^{S} \rightarrow \field{R}$ is submodular if and only if for any two sets $A \subseteq S$ and $B \subseteq S$ such that $A \subseteq B$ and $s_k \in S \setminus B$
\begin{equation*}\label{eq:submodDef2}
f(B \cup \{s_k\}) - f(B) \leq f(A \cup \{s_k\}) - f(A)
\end{equation*}}
\end{definition} 
%\vspace{0.1in}
Thus, submodularity implies that the incremental increase in the value of a function by adding an element in a small set is never smaller than adding that element in a larger set.

%An extension of  $f$ is a function defined on the hypercube $[0,1]^{|S|}$ such that it agrees with $f$ on the vertices of the hypercube. 

\subsection{Submodular Functions Over Lattices}\label{subsec:Submodular Functions over Products}
The notion of submodularity is not restricted to set functions. In this work, we are interested in submodular functions that are real-valued functions defined on set products
\begin{equation*}\label{eq:ProdChain}
\mc{X} = \prod_{i = 0}^{N-1} X_i.
\end{equation*}
In particular, our focus is on set products in which $X_i$ is a lattice for all $i \in \{0,1, \ldots,N-1\}$, and $\te{x} \in \mc{X}$ is a vector, i.e., $\te{x} = (x_0,x_1, \ldots, x_{N-1})$ where $x_i \in X_i$.  
\begin{definition}\label{def:SubmodProdSet}
{\it Let $f$ be a real valued function defined on a lattice $\mc{X}$. Then, $f$ is submodular if and only if for any pair $\te{x}$ and $\te{y}$ in $\mc{X}$}
\begin{equation*}\label{eq:sumodJoinandMeet}
f(\te{x} \vee \te{y}) + f(\te{x}\wedge \te{y}) \leq f(\te{x}) + f(\te{y}) 
\end{equation*}
\end{definition}

Similar to the diminishing return property of submodular set functions, we need a simpler criterion to verify the submodularity of a function defined over a set product. For a function defined over a product of finite number of chains, a simple criterion exists in terms of antitone differences. 	
Let $\mc{X}$ be a product of $N$ chains and $\te{x}$ be an element in $\mc{X}$. Given $A \subset \{0,1,\ldots,N-1\}$, we define a vector $\te{y}_{\te{x}\backslash A}$ as follows.
\[
\begin{cases}
\te{y}_{\te{x}\backslash A} (i) = \te{x}(i) \quad & \text{ if } i \notin A \\
\te{y}_{\te{x}\backslash A} (i) \in X_i \text{ and } \te{y}_{\te{x}\backslash A} (i)> \te{x}(i)	\quad &\text{ if }i \in A
\end{cases}
\]
Therefore, $\te{x} < \te{y}_{\te{x}\backslash A}$ from construction. 
%If $x_j$ is the largest element in $X_j$ for any $j \in A$, we say that $\te{y}_{\te{x}\backslash A}$ does not exist. 
A function $f: \mc{X} \rightarrow \field{R}$ is antitone in $\te{x}(j)$ over $\mc{X}$ if 
	\[
		f(\te{y}_{\te{x}\backslash \{j\}}) \leq f(\te{x})
	\]
for all $\te{x} \in \mc{X}$. Then, from Thm. 3.2 in \cite{topkis1978}, we can verify whether a function defined on a product of finite number of chains is submodular or not  as follows. 
\begin{definition}\label{def:Submod_antitone}
{\it Let $\mc{X} = \prod\limits_{i=0}^{N-1} X_i$ where $X_i$ is a chain for all $i \in \{0,1,\ldots,N-1\}$. A function $f:\mc{X}\rightarrow \field{R}$ is submodular if 
	\begin{align*}
	f(\te{y}_{\te{x}\backslash \{i\}}) - f(\te{x})
	\end{align*}
	is antitone in $\te{x}(j)$ for all $i$ and $j$ in $\{0,1,\ldots,N-1\}$, $i \neq j$, and for all $\te{x} \in \mc{X}$. In other words, the inequality 
\begin{equation}\label{eq:antitone}
	f(\te{y}_{\te{x}\backslash \{i,j\}}) - f(\te{y}_{\te{x}\backslash \{j\}})\leq f(\te{y}_{\te{x}\backslash \{i\}}) - f(\te{x}) 
\end{equation}
should hold for all $i$ and $j$ in $\{0,1,\ldots,N-1\}$, $i \neq j$, and for all $\te{x} \in \mc{X}$
}
\end{definition}
%\begin{definition}\label{def:Submod_antitone}
%{\it	Let $f$ be a function defined over $\mc{X} = \prod_{i=0}^{N-1} X_i$. If $X_i$ is a chain for all $i \in \{0,1,\ldots,N-1\}$, then $f$ is submodular on $\mc{X}$ if $f$ has antitone differences on $\mc{X}$. }
%\end{definition}

Thus, in the case of chain products, the condition in Def. \ref{def:Submod_antitone} reduces the question of submodularity to comparing all pairs of cross differences. If $X_i \subset \field{Z}$ for all $i \in \{0,1,\ldots,N-1\}$, where $\field{Z}$ is the set of integers,  then Def. \ref{def:Submod_antitone} implies that $f$ is submodular if $f(\te{x} + \te{e}_i) - f(\te{x})$ is antitone, i.e., 
\begin{equation}\label{eq:antitoneInt}
f(\te{x} + \te{e}_j +  \te{e}_i) - f(\te{x} +  \te{e}_j) \leq f(\te{x} +  \te{e}_i) - f(\te{x}) 
\end{equation}
for all $i$ and $j$ in $\{0,1,\ldots,N-1 \}$, $i \neq j$.
% Moreover, both $\epsilon $ and $\delta$ are positive and are selected such that $f$ is defined over all the terms in the above inequality,  . %, and for any $\te{x}$ and $\te{y}$ in $\mc{X}$ such that 
%\[
%\te{x}(j) \leq \te{y}(j), ~ \te{y}_{-j} = \te{x}_{-j}  .
%\]
%If $X_i$'s contain discrete values for all $i \in \{0,1,\ldots,N-1\}$, i.e, 
%If $\mc{X} \subset \field{Z}^N$, then (\ref{eq:antitone}) is valid with $\epsilon = 1$. 
 If $X_i$'s are continuous intervals of $\field{R}$, then  the condition in (\ref{eq:antitone}) implies that $f$ is submodular if 
\[
\frac{\partial^2f}{\partial x_i \partial x_j}(\te{x}) \leq 0 
\]
for all $\te{x} \in \mc{X}$ and $i$ and $j$ in $\{0,1,\ldots,N-1 \}$, $i \neq j$.

\section{Submodular Function Minimization}\label{sec:Submod Function Minimization}
We present a brief overview of the tools and techniques for minimizing a submodular function that are relevant to this work. 

\subsection{Minimizing Submodular Set Function}
One approach for solving a combinatorial optimization problem is to formulate a relaxed problem over a continuous set that can be solved efficiently. Every set function $f:2^{S} \rightarrow \field{R}$ can be represented as a function on the vertices of the hypercube $\{0,1\}^{|S|}$. This representation is possible because each $A \subseteq	S$ can be uniquely associated to a vertex of the hypercube via indicator vector $\mathbbm{1}_A$. Consider a function $f$ defined over a set $S = \{s_0,s_1\}$ with 
\[
2^{\mc{S}} = \{\{s_0\},\{s_1\},\{s_0,s_1\},\phi\}.
\]
The indicator vectors are 
\[
\mathbbm{1}_{\{s_0\}}=
\begin{bmatrix}
1\\
0
\end{bmatrix},~
\mathbbm{1}_{\{s_1\}}=
\begin{bmatrix}
0\\
1
\end{bmatrix},~
\mathbbm{1}_{\{s_0,s_1\}}=
\begin{bmatrix}
1\\
1
\end{bmatrix},~
\mathbbm{1}_{\{\phi\}}=
\begin{bmatrix}
0\\
0
\end{bmatrix}.
\] 
Without loss of generality, we can assume that $f(\phi) = 0$.

Since every set function can be defined on the vertices of the unit hypercube $\{0,1\}^{|S|}$, a possible relaxation is to extend the function over the surface of the entire hypercube $[0,1]^{|S|}$. An extension of $f$ on $[0,1]^{|S|}$ is a function defined on the entire surface of the hypercube such that it agrees with $f$ on the vertices of the hypercube. %If the solution of the relaxed and the original problem agree, then the solution is exact. Otherwise, error bounds are computed for the maximum or average loss of performance between the relaxed and the optimal solution. 
A popular extension of a set function defined on the vertices of the hypercube is its convex closure \cite{dughmi2009}. Let $D$ be the set of all distributions over $2^{S}$ and E$_{ A\sim d}f(A) $ be the expected value of $f$ over $2^S$ when the sets $A$  are drawn from $2^S$ according to some distribution $d \in D$. Then the convex closure of $f$ is defined as follows.

\begin{definition}%\cite{dughmi2009}
{\it	Let $f:\{0,1\}^{|S|} \rightarrow \field{R}$ such that $f(\phi) = 0$. For every $\te{x} \in [0,1]^{|S|}$, let $d_f(\te{x})$ be a distribution over $2^S$ with marginal $\te{x}$ such that %minimizes the expected value E$_{D(\te{x})}f$.
	\begin{equation}\label{eq:d_f}
	d_f(\te{x}) = \argmin_{d \in D(\te{x})} E_{A \sim d}f(A)
	\end{equation}
	where $D(x) \subseteq D$ is the set of all distributions with marginal $\te{x}$. 
	%the subsets of $S$ with marginal $x$ that minimizes the expected value E$_{D_f(x)}f$. 
	Then the convex closure of $f$ at $\te{x}$ is
	\[
	f_{\te{cl}}(\te{x}) = E_{A \sim d_f(\te{x})}f(A)
	\]}
	%$f_{\text{cl}}(\te{x})$ is the expected value E$_{A \sim d_f(\te{x})}f(A)$.}
\end{definition}

It is important to highlight that the extension $f_{\text{cl}}$ is convex for any set function $f$ and does not require $f$ to be submodular.
 
%$d_f(\te{x})$ minimizes E$_{d}f$ over all $d\in D(\te{x})$, an optimization problem has to be solved for computing the convex closure of a set function. 
\emph{Example 1}: Let $S = \{s_0,s_1\}$ and 
\[
2^{\mc{S}} = \{\{s_0\},\{s_1\},\{s_0,s_1\},\phi\}.
\]
Consider a function $f:2^S \rightarrow \field{R}$, such that  
\[
f(A) = \min\{|A|,1\}, ~A \subseteq S.
\]
This function is equal to one everywhere except at $A = \phi$, where it is equal to zero, i.e., 
\[
f(\mathbbm{1}_{\{\phi\}}) = 0\text{ and } f(\mathbbm{1}_{\{s_0\}}) = f(\mathbbm{1}_{\{s_1\}})= f(\mathbbm{1}_{\{s_0,s_1\}}) = 1.
\]
To compute the convex closure of $f$,  let
\[
d_1 = (0.1, 0.2, 0.6,0.1)
\]
be a distribution over $2^S$. Then 
\begin{multline*}
E_{A \sim d_1}f(A) = 0.1f(\mathbbm{1}_{\{s_0\}})  + 0.2f(\mathbbm{1}_{\{s_1\}}) + 0.6f(\mathbbm{1}_{\{s_0,s_1\}})\\ + 0.1f(\mathbbm{1}_{\{\phi\}}). 
\end{multline*}

Suppose we want to compute  $f_{\te{cl}}$ at $\te{x} = [0.3 ~~0.5]$. For a distribution $d \in D$ to be in $D(x)$, we need to show that its marginal is $\te{x}$. We first verify whether $d_1$ belongs to $D(\te{x})$ for $\te{x} = [0.3 ~~0.5]$ or not. If $A \in 2^S$ is drawn with respect to $d_1$, then
\begin{align*}
\te{Pr}(s_0 \in A) &= 0.1+0.6 = 0.7 \\
\te{Pr}(s_1 \in A) &= 0.2+0.6 = 0.8 
\end{align*}
Thus, the marginal of $d_1$ is not equal to $\te{x}$. Consider another distribution 
\[
d_2 = (0.1,0.3,0.2,0.4).
\]
Then, the marginal with respect to $d_2$ is
\begin{align*}
\te{Pr}(s_0 \in A) &= 0.1+0.2 = 0.3 \\
\te{Pr}(s_1 \in A) &= 0.3+0.2 = 0.5 
\end{align*}
which is equal to $\te{x}$. Therefore, for any $\te{x} \in [0,1]^{|S|}$, $\te{x}(i)$ can be considered as a probability of $s_i$ to be in a set $A$ that is drawn randomly according to a distribution $d$. 

To compute $f_{\te{cl}}(\te{x})$, we need to compute $d_f(\te{x})$ as defined in (\ref{eq:d_f}). The significance of the extension $f_{\te{cl}}$ is that minimizing a set function $f$ over $2^S$ is equivalent to minimizing $f_{\te{cl}}$ over the entire hypercube $[0,1]^{|S|}$ (see \cite{bach2013} for details). Since $f_{\te{cl}}$  is convex, it can be minimized efficiently.  
However, computing $f_{\text{cl}}$ can be expensive because it involves solving the optimization problem in (\ref{eq:d_f}), which requires $2^{|S|}$ computations.

Another extension of a set function on the surface of the unit hypercube was proposed in \cite{lovasz1983}, which is generally referred to as the Lov{\'a}sz extension. The Lov{\'a}sz extension can be computed  by a simple greedy heuristic as follows. Let $\te{x}=(x_0,x_1,\ldots,x_{|S|-1})$ be a vector in $[0,1]^{|S|}$. Find a permutation 
\[
(i_1,i_2,\ldots,i_{|S|}) \text{ of } (0,1,\ldots,{|S|-1})
\]
such that 
\[
(x({i_1})\geq x({i_2})\geq\ldots\geq x({i_{|S|}})).
\]
Then 
\begin{multline*}\label{eq:lovaszEx}
f_{\te{lv}}(\te{x}) = \sum_{k =1}^{|S|-1}f(\{s_{i_1},\ldots,s_{i_k}\})(x({i_k}) - x({i_{k+1}})) +\\f(S)x(i_{|S|}).
\end{multline*}
{\it The most important result proved in \cite{lovasz1983} was that the Lov{\'a}sz extension $f_{\te{lv}}$ of a set function $f$ is convex if and only if $f$ is submodular. In fact, if $f$ is submodular, its convex closure $f_{\te{cl}}$ and the Lov{\'a}sz extension $f_{\te{lv}}$ are the same.}

Thus, the fundamental result regarding the minimization of a submodular set function, as presented succinctly in Prop. 3.7 of \cite{bach2013}, is that minimizing the Lov{\'a}sz extension $f_{\te{lv}}$ of a set function $f$ on $[0,1]^{|S|}$ is the same as minimizing $f$ on $\{0,1\}^{|S|}$, which is the same as minimizing $f$ on $2^{|S|}$. In other words, the following three problems are equivalent. 
\begin{align*}
&\min\{f(A):A \subseteq S \}\\
&\min\{f(X):X \in  \{0,1\}^{|S|}\}\\
&\min\{f_{\te{lv}}(\te{x}):\te{x} \in  [0,1]^{|S|}\}
\end{align*}
{\it This equivalence implies that a submodular minimization problem in discrete domain can be solved exactly by solving a convex problem in continuous domain for which efficient algorithms exist like sub-gradient based algorithms.} 

Furthermore, let $\partial f_{\te{lv}}\bigr\rvert_{\te{x}}$ be a subgradient vector of $f_{\te{lv}}$ evaluated at $\te{x}$. Then, $\partial f_{\te{lv}}$ can also be computed via the same greedy heuristic while computing the Lov{\'a}sz extension as follows. 
\begin{equation*}
\partial f_{\te{lv}}(k)\bigr\rvert_{\te{x}} = f(\{i_1,\ldots,i_k\}) - f(\{i_1,\ldots,i_{k-1}\}).
\end{equation*} 
for all $k \in \{0,1,\ldots,|S|-1\}$, where $(i_1,i_2,\ldots,i_{|S|})$ is the permutation of $(0,1,\ldots,{|S|-1})$ that was used for computing the Lov{\'a}sz extension.
%and $$\partial f_{\te{lv}}(x) =(\partial f_{\te{lv}}(x(1)) ,\partial f_{\te{lv}}(x(2)) ,\dots,\partial f_{\te{lv}}(x(|S|)) ).$$

\subsection{Submodular Minimization Over Ordered Lattices}
In \cite{bach2015}, it was shown that most of the results relating submodularity and convexity like efficient minimization via Lov{\'a}sz extension can be extended to submodular functions over lattices. In particular, lattices defined by product of chains was considered and an extension was proposed in the set of probability measures. It was proved that the proposed extension on the set of probability measures was convex if and only if the original function defined over product of chains was submodular. Moreover, it was proved that minimizing the original function was equivalent to minimizing the proposed convex extension on the set of probability measures. 

A greedy algorithm was also presented in \cite{bach2015} for computing the continuous extension of a submodular function defined over a finite chain product. In the second half of this paper, we show that a class of motion coordination problems over discrete domain can be formulated as submodular minimization problems over chain products. Hence, the greedy algorithm in \cite{bach2015} can play a significant role in efficient motion coordination over discrete domain for multiagent systems. Therefore, we include the algorithm here for the completeness of presentation. For details, we refer the readers to \cite{bach2015}.  
%These results significantly increase the scope of submodular minimization even in continuous domain because there are functions that are not convex or concave but are submodular. In such applications, the state space will be an integer lattice. %In a forthcoming section, a multiagent motion coordination problem will be formulated such that $X_i^{\te{val}}$ represents the state space of agent $i$. %We will show in this paper through examples that a large class of such problems have submodular cost function that needs to be minimized. 
%
%The greedy algorithm for computing the Lov{\'a}sz extension of a submodular function defined over a lattice was presented in detail in \cite{bach2015}. 
%We include this algorithm for the completeness of presentation. However, for simplicity, the algorithm is presented here for the case when the lattice is a product of finite number of discrete sets, each of which contains a finite number of elements. For detailed analysis of the algorithm for general lattices, we refer the reader to \cite{bach2015}. %The reason for selecting integer lattice is that we are interested in formulating motion coordination problems over discrete state space as submodular minimization problems. %For such problems, the state space is either $\field{Z}^{2N}$ or $\field{Z}^{3N}$ depending on whether we have ground or aerial vehicles.   

Let $\mc{X} $ be a product of $N$ discrete sets with finite number of elements 
\[
\mc{X} = \prod_{i=0}^{N-1} X_i.
\]
We assume that $X_i = \{s_0,s_1,\ldots,s_{m_i-1}\}$ is a chain for all $i$, which implies that the product set $\mc{X}$ is a lattice. Since $X_i$'s are chains, we can order their elements and represent each set by the index set
\[
X_i = \{0,1,\ldots, m_i-1\}.
\]
such that $s_j \leq s_{j+1}$. Then, any $\te{x}\in \mc{X}$ will be an index vector.    

We are interested in computing an extension of a function $f$ over a continuous space. For set functions, the continuous space was the entire hypercube $[0,1]^{|S|}$ and $\te{x}(i)$ was interpreted as a probability measure on the set $\{0,1\}$ corresponding to the entry $s_i \in S$. In the case of set products in which every set contains more than a single element, the notion of probability measure needs to be more general. 

Let $P(X_i)$ be the set of all probability measures on ${X}_i$. Then, $\mu_i \in P(X_i)$ is a vector in $[0,1]^{m_i}$ such that
\begin{align*}
\sum_{j=0}^{m_i-1}&\mu_{i}(j) =1
\end{align*}
For a product set $\mc{X}$, let $\mc{P}(\mc{X})$ be the set of product probability measures. For any $\mu \in \mc{P}(\mc{X})$
\[
\mu = \prod_{i=0}^{N-1}\mu_i, ~\mu_i \in P(X_i) ~~\forall~~ i
\]
A probability measure $\mu_{i}$ is degenerate if $\mu_i(j) =1$ for some $j\in \{0,1,\ldots,m_i-1\}$. 
\begin{comment}
For an $\te{x} \in \mc{X}$, we define a degenerate probability measure as
\begin{align*}\label{eq:prod_e}
\te{e}_{\te{x}} = \prod_{i=0}^{N-1} \te{e}_{x_i},
\end{align*}
where $\te{e}_{x_i}$ is defined in (\ref{eq:e_i}). 
\end{comment}
We define $F_{\mu_{i}}:X_i \rightarrow \field{R}$ as 
\begin{equation*}
F_{\mu_{i}}(j) = \sum_{l = j}^{m_i-1} \mu_{i}(l).
\end{equation*}
Thus, $F_{\mu_{i}}$ is similar to cumulative distribution function but is reverse of it. Since $\mu_{i}$ is a probability measure, $F_{\mu_{i}}(0) $ is always equal to one. Therefore, we will ignore $F_{\mu_{i}}(0) $ and only consider $m_i-1$ values to reduce dimension of the problem. 

For a probability measure $\mu_{i}$ on $X_i$, we define a vector $\rho_{{i}}$ as 
\begin{equation}\label{eq:rho_i}
\rho_{{i}} = (F_{\mu_{i}}(1), F_{\mu_{i}}(2),\ldots,F_{\mu_{i}}(m_i-1)).
\end{equation}
Since
\[
F_{\mu_{i}}(j+1) \leq F_{\mu_{i}}(j),
\]
for all $j \in \{0,1,\ldots,m_i-1\}$, $\rho_{{i}}$ is a vector with non-increasing entries. The equality $\rho_{{i}}(j) = \rho_{{i}}(j+1)$ occurs if and only if $\mu_{i}(j) = 0$. Thus, $\rho_i \in [0,1]^{m_i-1}_{\downarrow}$ where
\[
[0,1]^{m_i-1}_{\downarrow} = \{\tilde{\rho}\in [0,1]^{m_i-1}  ~:~ \tilde{\rho}(i+1)\leq \tilde{\rho}(i)~\forall~ i\}
\]
 %$[0,1]^{m_i-1}_{\downarrow}$ is a set of vectors such that $0\leq \rho(i) \leq 1$ and $\rho(i+1)\leq \rho(i)$ for all $i \in \{0,1,\ldots, m_i-1\}$. 
 For a product set $\mc{X}$, we define the set $\Omega(\mc{X})$ as
 \begin{equation}\label{eq:Omega}
 \Omega(\mc{X}) = \prod_{i=0}^{N-1}[0,1]^{m_i-1}_{\downarrow} 
 \end{equation} 
 Then any $\rho \in \Omega(\mc{X})$ is 
\[
\rho =\prod_{i=0}^{N-1} \rho_{{i}}, \text{ where } \rho_i \in [0,1]^{m_i-1}_{\downarrow}
\]
%For all $l \in \{0,1,\ldots,m_i-1\}$,

Let $\theta_{\rho_i}:[0,1] \rightarrow \{0,1,\ldots,m_i-1\}$ be an inverse map of $\rho_{{i}}$ and is defined as 
\begin{equation*}
\theta_{\rho_i}(t) = \max\{0\leq l \leq m_i-1 : \rho_{{i}}(l) \geq t\}.
\end{equation*}
From the definition of $\theta_{\rho_i}$,   
\[ \theta_{\rho_i}(t) =
\begin{cases}
m_i-1  & \quad t < \rho_{{i}}(m_i-1).  \\
l       & \quad  \rho_{{i}}(l+1)<t<\rho_{{i}}(l),\\
& \quad l\in\{1,2,\ldots,m_i-2\}.\\
0  & \quad  t> \rho_{{i}}(1).\\
\end{cases}
\]
The boundary values can be arbitrary and does not impact the overall setup. The definition of $\theta_{\rho_i}$ is extended to a product set $\mc{X}$ as follows
\[
\theta_{\rho}(t) = \prod_{i=0}^{N-1} \theta_{\rho_i}(t) ,
\]
where $t \in [0,1]$. 

\begin{algorithm}[h]
	\caption{\bf Greedy Algorithm }\label{alg:GreedyAlg}
	\begin{algorithmic}[1]
%		\Require Product probability measures $\mu=  \prod_{i=0}^{N-1} \mu_{i}$.
		%\State Compute $F_{\mu} = \Pi_{i=1}^N F_{\mu_{X_i}}$ for $X_i=\{0,1,2,\ldots,m_{i}-1\}$. 
		\Require $\rho =  \prod_{i=0}^{N-1} \rho_{{i}}$ . 
		\State Form a set ${Q}$ as follows. 
		\begin{align*}
		{Q} &= \{\rho_{{0}}(1),\ldots,\rho_{{0}}(m_0-1),\rho_{{1}}(1),\ldots,\\
		&\rho_{{1}}(m_1-1),
		\ldots,\rho_{{{N-1}}}(1),\ldots,\rho_{{{N-1}}}(m_{N-1}-1)\}.
		\end{align*}
		The number of elements in ${Q}$ is $r = \sum_{i = 0}^{N-1} m_i - n$. In the summation, $n$ is subtracted because $F_{\mu_{i}}(0)$ is neglected in $\rho_{{i}}$ for all $i$.
		\State Arrange all the $r$ values of  ${Q}$ in decreasing order in the set ${Q}_{\te{dec}}$, i.e.,  
		\begin{align*}
		{Q}_{\te{dec}} = \{\rho_{{{i_1}}}(j_1), \rho_{{{i_2}}}(j_2), \ldots,\rho_{{{i_r}}}(j_r)\},
		\end{align*}
		such that 
		$$\rho_{{{i_1}}}(j_1) \geq \rho_{{{i_2}}}(j_2)\geq \ldots \geq \rho_{{{i_r}}}(j_r).$$ The ties are handled randomly. However, in the case of ties within $\rho_{{i}}$ for some $i$, the order of the values are maintained.  
		\State Compute the extension of function $f$ over the probability measures  as follows
		\begin{equation}\label{eq:general_LovExt}
		f^{\te{ext}}(\rho) = f(0) + \sum_{s = 1}^{r} \te{t}(s)\left( f(\te{y}_s) - f(\te{y}_{s-1})\right),
		\end{equation}
		where 
		\begin{align*}
		\te{t}(s) &= \rho_{{{i_s}}}(j_s)~~\forall~~ s\in \{0,1,\ldots,r-1\}
		\end{align*}
		Moreover, the vector $\te{y}_s \in \mc{X}$ is  
		\begin{equation}
		\te{y}_s=
		\begin{cases}
			(0,0,\ldots,0) \quad & s = 0 \\
			\te{y}_{s-1} + e_{i_s} \quad & 1\leq s\leq r-1 \\
			(m_0-1,m_1-1,\ldots,m_{N-1}-1)& s = r
		\end{cases}
		\end{equation}
		\begin{comment}
		\begin{align*}
		\te{y}_0 &= (0,0,\ldots,0),\\
		\te{y}_{r} &= (m_0-1,m_1-1,\ldots,m_{N-1}-1),
		\end{align*}
		and for any $s \in\{1,2,\dots,r-2\}$, $\te{y}_s(i)$, the $i^{\mathrm{th}}$ value of $\te{y}_s$ is 
		\begin{equation}\label{eq:y(s)}
		\te{y}_s(i) = \theta_{{\rho_{i}}}(\tau)~~~~ \tau\in (\te{t}(s+1),\te{t}(s))
		\end{equation}
		\end{comment}
		\State The subgradient of $f^{\te{ext}}$ evaluated at $\rho$ is 
		\[
		\partial f^{\te{ext}}\bigr\rvert_{\rho} =\prod_{i=0}^{N-1} \partial f^{\te{ext}}\bigr\rvert_{\rho_{i}}. 
		\]
		The $j^{\mathrm{th}}$ component of  $ \partial f^{\te{ext}}\bigr\rvert_{\rho_{i}} $ is 
		\begin{equation}\label{eq:general_subgrad}
		\partial f^{\te{ext}}\bigr\rvert_{\rho_{i}}(j) = f(\te{y}_g) - f(\te{y}_{g-1}),
		\end{equation}
		where $g = \min\{s \in \{0,1,\ldots,r-1\}: \te{y}_s(i) = j\}$. 
	\end{algorithmic}
\end{algorithm}

Let $f:\mc{X}\rightarrow \field{R}$ be a real valued function defined over $\mc{X}$. Then, the greedy algorithm for computing an extension of $f$ over a continuous space is presented in Alg. \ref{alg:GreedyAlg}. The extension $f^{\te{ext}}$ of $f$ is given in  (\ref{eq:general_LovExt}) and the subgradient of $f^{\te{ext}}$ is in  (\ref{eq:general_subgrad}). The algorithm requires sorting $r$ values, which has a complexity of $O(r \log r)$, and $r$ evaluations of the function, where 
\begin{equation}\label{eq:r}
r = \sum_{i=0}^{N-1}m_i - N.
\end{equation} 
We refer the reader to \cite{bach2015} for the details and the complexity analysis of the greedy algorithm.  

It was proved in \cite{bach2015} that for a function $f:\mc{X} \rightarrow \field{R}$, where $\mc{X}$ is a product of $N$ finite chains, $f^{\te{ext}}(\rho)$ is convex if and only if $f$ is submodular. It was also proved that minimizing $f$ over $\mc{X}$ is equivalent to minimizing $f^{\te{ext}}$ over $\Omega(\mc{X})$, i.e, 
\begin{align*}
\min_{\te{x} \in \mc{X}} ~~f(\te{x}) = \min_{\rho \in \Omega(\mc{X})} f^{\te{ext}}(\rho)
\end{align*} 
and $\rho^*\in \Omega(\mc{X})$ is the minimizer for $f^{\te{ext}}$ if and only if $\theta_{\rho^*}(t)$ is a minimizer for $f$ for all $t \in [0,1]$. 
Therefore, by minimizing $f^{\te{ext}}$ over $\Omega(\mc{X})$, we can find a minimizer for a submodular function $f$ over an ordered lattice $\mc{X}$. 

For a submodular set function $f:2^S \rightarrow \field{R}$
\[
\min_{A \subseteq S} ~~f(A) = \min_{\te{x} \in \mc{X}} ~~f(X)
\]
where $\mc{X} = \prod\limits_{i = 0}^{|S|-1} X_i$ and $X_i = \{0,1\}$ for all $i$. Thus, submodular set functions are a particular instance of submodular functions over product sets. Moreover, Alg. \ref{alg:GreedyAlg} reduces to Lov{\'a}sz extension for set functions. Therefore, from this point onwards, we will only focus on submodular functions defined over product of chains. 
\section{Distributed Submodular Minimization}\label{sec:Distributed Submodular Minimization}
%The greedy algorithm presented in Algorithm 1 can compute the value of a general submodular function over a continuous domain. Moreover, it enables us to compute a subgradient of the extension of $f$. 
In this section, we present the main contribution of this work, which is a distributed algorithm for minimizing a submodular function defined over a product of $N$ chains. 

Consider a system comprising $N$ agents, $\{a_0,a_1,\ldots,a_{N-1}\}$. The global objective is to minimize a cost function, which is the sum of $N$ terms over a product set $\mc{X}$. Each agent has information about one term only in the global cost function. Thus, the agents need to solve the following optimization problem collaboratively
\begin{align}\label{prob:distProb}
\min_{\te{x} \in \mc{X}}~~~~~~ J(\te{x})= \sum_{i=0}^{N-1} J_i(\te{x}) \tag{$\mc{P}$} 
\end{align}
where
\begin{align*}
\mc{X} = \prod_{j=0}^{p-1}X_j,~~~~|X_j| = m_j\nonumber
\end{align*}
We assume that $J_i: \mc{X} \rightarrow \field{R}$ is a submodular function and each $X_j$ is a chain. Since the total cost is a sum of $N$ submodular functions, it is also a submodular function. 

The cost function of each agent in \ref{prob:distProb} depends on the entire decision vector, which is global information. However, we assume that each agent has access to local information only. The local information of agent $a_i$ consists of the term $J_i$ in the cost function. Moreover,  each agent is allowed to communicate with a subset of other agents in the network. Therefore, no agent has direct access to any global information. The communication network is represented by a graph $\field{G}(V,\mc{E})$, where $V = \{a_0,a_1,\ldots,a_{N-1}\}$ is the set of vertices and $\mc{E} \subseteq V\times V$ is the set of edges. An edge $(a_i,a_j)\in \mc{E}$ implies that agents $a_i$ and $a_j$ can communicate with each other. The closed neighborhood set of $a_i$ contains $a_i$ and the agents with which $a_i$ can communication, i.e., 
$${N}(a_i) = a_i \cup \{a_j \in V : (a_i,a_j) \in \mc{E}\}.$$ The communication network topology is represented algebraically by a weighted incidence matrix $A$ defined as follows. 
\[
A(i,j) = 
\begin{cases}
c_{ij} \quad & a_j \in N(a_i)\\
0	\quad  & \te{otherwise}	
\end{cases}
\] 
where $c_{ij}\geq0$ for all $i$ and $j$. 
\vspace{0.1in}
\begin{thm}
{\it
	If the communication network topology satisfies the following conditions
	\begin{enumerate}
		\item The communication graph $\field{G}(V,\mc{E})$ is strongly connected. 
		\item There exists a scalar $\eta \in (0,1)$ such that $c_{ii} \geq \eta$ for all $i\in \{0,1,\ldots,N-1\}$. 
		\item For any pair of agents $(a_i,a_j) \in \mc{E}$, $c_{ij}\geq \eta$. 
		\item Matrix $A$ is doubly stochastic, i.e., $\sum_{i=0}^{N-1} c_{ij} = 1$ and $\sum_{j=0}^{N-1} c_{ij} = 1$ for all $i$ and $j$ in $\{0,1,\ldots,N-1\}$
	\end{enumerate}
Then, the submodular function in \ref{prob:distProb} can be minimized exactly in a distributed manner. }
\end{thm}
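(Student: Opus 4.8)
The plan is to reduce Problem \ref{prob:distProb} to a distributed \emph{convex} program over the continuous domain $\Omega(\mc{X})$ and then invoke the convergence guarantee of the projected subgradient method of \cite{nedic2010}. The starting point is the equivalence of \cite{bach2015} recalled above: since each $J_i$ is submodular, so is $J = \sum_i J_i$, its extension $J^{\te{ext}}$ is convex on $\Omega(\mc{X})$, and
\[
\min_{\te{x}\in\mc{X}} J(\te{x}) = \min_{\rho\in\Omega(\mc{X})} J^{\te{ext}}(\rho),
\]
where a discrete minimizer is recovered from any continuous minimizer $\rho^*$ through the map $\theta_{\rho^*}(t)$. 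The observation that makes the problem \emph{distributable} is that the extension in (\ref{eq:general_LovExt}) is a \emph{linear} functional of the underlying function values: for a fixed $\rho$, the sorted set $Q_{\te{dec}}$, the breakpoints $\te{t}(s)$, and the chain of lattice points $\te{y}_0,\ldots,\te{y}_r$ depend only on $\rho$ and not on the function being extended. Hence $J^{\te{ext}} = \sum_{i=0}^{N-1} J_i^{\te{ext}}$ pointwise on $\Omega(\mc{X})$, and each summand $J_i^{\te{ext}}$ is itself convex because $J_i$ is submodular.

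With this decomposition, Problem \ref{prob:distProb} becomes the canonical distributed convex program
\[
\min_{\rho\in\Omega(\mc{X})}\; \sum_{i=0}^{N-1} J_i^{\te{ext}}(\rho),
\]
in which agent $a_i$ holds exactly the local convex objective $J_i^{\te{ext}}$. Two facts place this squarely in the framework of \cite{nedic2010}. First, the feasible set $\Omega(\mc{X})$ in (\ref{eq:Omega}) is convex: each factor $[0,1]^{m_i-1}_{\downarrow}$ is the intersection of a box with linear ordering inequalities, hence a convex polytope, and a product of convex sets is convex, so projection onto $\Omega(\mc{X})$ is well defined. Second, a subgradient of each local term comes for free: the greedy Algorithm \ref{alg:GreedyAlg} returns $\partial J_i^{\te{ext}}\bigr\rvert_{\rho}$ via (\ref{eq:general_subgrad}) using only evaluations of the local function $J_i$. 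Each agent therefore maintains an estimate of $\rho^*$, mixes it with its neighbors' estimates through the weights $c_{ij}$, and takes a projected step along its local subgradient.

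Finally I would verify that hypotheses (1)--(4) are precisely the network conditions under which this projected subgradient iteration drives every agent's estimate to a common global minimizer. Strong connectivity (condition 1) guarantees that local information propagates across the whole network; the uniform lower bounds on the self-weights and edge weights (conditions 2 and 3) supply the bounded-mixing property required for the consensus component of the update; and double stochasticity of $A$ (condition 4) ensures the averaging step is unbiased, so the agents agree on a point minimizing the \emph{total} objective rather than a weighted variant. The cited theorem then yields that each agent's estimate converges to some $\rho^*$ minimizing $\sum_i J_i^{\te{ext}}$ over $\Omega(\mc{X})$, and applying $\theta_{\rho^*}$ returns an exact minimizer of $J$ over $\mc{X}$ — exact in the sense that the convex reformulation is lossless, not merely an approximation. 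I expect the main obstacle to be the linearity/decomposition step: one must check carefully that the data-independent quantities produced by the greedy algorithm (the ordering of $Q$, the breakpoints $\te{t}(s)$, and the sequence $\te{y}_s$) depend only on $\rho$, so that summing the local extensions exactly reproduces the extension of the global submodular cost. Everything downstream then follows from the convexity results of \cite{bach2015} and the off-the-shelf convergence guarantee of \cite{nedic2010}.
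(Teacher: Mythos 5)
Your proposal is correct and follows essentially the same route as the paper: reformulate \ref{prob:distProb} as the equivalent convex program \ref{prob:relaxedProb} over $\Omega(\mc{X})$ via the extension of \cite{bach2015}, observe that the objective decomposes as $\sum_i J_i^{\te{ext}}$ with locally computable subgradients from Alg.~\ref{alg:GreedyAlg}, and invoke the consensus-based projected subgradient convergence result of \cite{nedic2010} under conditions (1)--(4). The only difference is that you explicitly justify the pointwise decomposition $J^{\te{ext}}=\sum_i J_i^{\te{ext}}$ via the linearity of the greedy extension in the function values, a step the paper takes for granted.
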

\begin{proof}
%Following \cite{nedic2010}, we make following assumptions on network topology to ensure convergence of the distributed optimization algorithm.  
%To minimize the problem in (\ref{prob:distProb}), we rely on the relaxation based approach as presented in the previous section. %Since the cost function $J$ in (\ref{prob:distProb}) is a submodular function, we can formulate an equivalent problem on a relaxed space b
%it was shown in \cite{bach2015}, that solving the problem in  (\ref{prob:distProb}) is equivalent to solving the following problem. 
To prove this theorem, we rely on the relaxation based approach for solving submodular minimization problems as presented in the previous section. The first step is to formulate the following relaxed problem. 
\begin{align}\label{prob:relaxedProb}
 \min\limits_{\rho \in \Omega(\mc{X})}& ~~J^{\te{ext}}(\rho)= \sum_{i=0}^{N-1} J_i^{\te{ext}}(\rho). \tag{$\mc{P}1$}
\end{align}
In the relaxed problem, $J^{\te{ext}}$ is the extension of $J$, which is computed by (\ref{eq:general_LovExt}) of Alg. \ref{alg:GreedyAlg}, and $\Omega(\mc{X})$ is the constraint set defined in (\ref{eq:Omega}). 
\begin{comment}
\begin{equation}\label{eq:Omega}
\Omega = \prod_{j=0}^{p-1} [0,1]^{m_j-1}_{\downarrow}.
 \end{equation}
%
Thus,
\[
\rho \in \Omega \implies \rho_{{j}} \in [0,1]^{m_j-1}_{\downarrow}~~\forall~~ j \in \{0,1,\ldots,p-1\}
\]
% 
\end{comment}
It was shown in \cite{bach2015} that the problems \ref{prob:distProb} and \ref{prob:relaxedProb} are equivalent. Therefore, we can find an optimal solution to \ref{prob:distProb} through an optimal solution to \ref{prob:relaxedProb}.

Problem \ref{prob:relaxedProb} is a constrained convex optimization problem. Based on the results in \cite{nedic2010}, if the communication network topology satisfies the conditions 1-4 in the theorem statement, then the consensus based projected subgradient algorithm presented in \cite{nedic2010} can solve \ref{prob:relaxedProb}. In the algorithm presented in \cite{nedic2010}, only neighboring agents are required to communicate with each other. Therefore, we can solve \ref{prob:relaxedProb} distributedly through consensus based projected subgradient algorithm, which concludes the proof the theorem. The details of the algorithm are presented in Alg. \ref{alg:DistOpt}. 
\end{proof}

%For distributed minimization of this problem, we use consensus based projected subgradient algorithm in our proposed framework. In this algorithm, as presented in \cite{nedic2010}, each agent is only allowed to communicate with the agents in its neighborhood set. 
Since the cost of each agent depends on the entire state vector, agents need global information to solve the optimization problem. The main idea in the distributed consensus based algorithm is that each agent generates and maintains an estimate of the optimal solution based on its local information and communication with its neighbors. The local information of agent $a_i$ is the cost function $J_i$. It solves a local optimization problem and exchanges its local estimate of the solution with its neighbors. Then, it updates its estimate of the optimal solution by mixing the information it received from its neighbors. Under the conditions mentioned in the theorem statement, it was proved that this local computation and communication converges to a globally optimal solution. The details are presented in  Alg. \ref{alg:DistOpt} from the perspective of agent $a_i$. 

In Alg. \ref{alg:DistOpt}, $a_i$ starts by initializing its estimate $\rho^i$ of the optimal solution with a feasible product vector, i.e., $\rho^i[0] \in \Omega_{\mc{X}}$. 
%vector $\te{x}^i \in \mc{X}$. The product vector $\rho$ at time 0 corresponds to the degenerate probability measure $\te{e}_{\te{x}^i}$ . Therefore,  $\rho^i[0]$, which is agent i's initial estimate of optimal solution, is initialized with $rho$ c
%probability measure at time 0 is the degenerate measure $\te{e}_{\te{x}^i}$ and the corresponding vector $\rho^i[0] = \rho$. 
To update $\rho^i[k]$ for all $j \in \{0,1, \ldots,N-1\}$, $a_i$ exchanges its local estimate with all the agents in its neighborhood set $N(a_i)\backslash a_i$. The estimates are updated in two steps, a consensus step and a gradient descent step. The consensus step is  (\ref{eq:Consensus}) in which $a_i$ computes a weighted combination of the estimates of $a_k \in N(a_i)$ by assigning weight $c_{iw}$ to $\rho^w[k-1]$  . The gradient descent step is in  (\ref{eq:Update}), in which the combined estimate $\nu^i$ is updated in the direction of gradient descent of $J^{\te{ext}}_i$ evaluated at $\nu^i$. Here, $\gamma_k$ is the step size of the descent algorithm at time $k$. The gradient $\partial J_i^{\te{ext}}\bigr\rvert_{\nu^i}$ is computed through the greedy algorithm. 

\begin{algorithm}[t!]
\caption{\bf Distributed Submodular Minimization }\label{alg:DistOpt}
\begin{algorithmic}[1]
\Statex To solve \ref{prob:relaxedProb}, agent $a_i$ has to perform the following steps:

%\Require Select any vector $\te{x}^i \in \mc{X}$. 
%\State Let $\mu = \te{e}_{\te{x}^i}$ be the degenerate probability measure corresponding to $\te{x}^i$. Compute $\rho$ with respect to $\mu$. 
\State Select any $\rho \in \Omega(\mc{X})$, where $\Omega(\mc{X})$ is defined in (\ref{eq:Omega}). Set  
\begin{align*}
\rho^i[0] &= \rho,
\end{align*}
%Thus,
%\[
%\rho^i[0] = \prod_{j=0}^{p-1} \rho^i_j[0] \text{ where } \rho^i_j[0] = \rho_{\te{e}_{\te{x}^i(j)}}
%\]
%\State For all $j\in \{1,2,\ldots,N\}$, set
%\[
%\rho^i_j[0] = \rho_{\mu^i_{\mc{X}_j}}.
%\]
\State At each time $k$, update $\rho^i[k-1]$ as follows  
\For{$k=1$ to iter} 
%For all $j \in \{1,2,\ldots,N\}$
\For{$j=0$ to $p-1$}
\begin{equation}\label{eq:Consensus}
\nu^i_j = \sum_{w=0}^{N-1} c_{iw}\rho^w_j[k-1]%_{\mu^k_{X_j}}[l-1] 
\end{equation}
\EndFor
\State Set
\begin{equation}\label{eq:Update}
\rho^i [k] = \field{P}_{\Omega({\mc{X}})}\left(\nu^i - \gamma_k \partial J_i^{\te{ext}}\bigr\rvert_{\nu^i}\right),
\end{equation}
where $\nu^i =\prod\limits_{j=0}^{p-1} \nu^i_j$, and $\gamma_k$ is the step size. 
\EndFor
\State Set $\hat{\rho}^i =\rho^i[\te{iter}]$. 

\State Agent $a_i$'s estimate of optimal solution is
\begin{equation}\label{eq:distoptSolution}
\hat{\te{x}}^i = \theta_{\hat{\rho}^i}(\hat{t})
\end{equation}
for some $\hat{t} \in [0,1]$. 

\begin{comment}
 \State Compute ${\hat{\mu}}^i_{\mc{X}} =\prod\limits_{j=0}^{p-1} {\hat{\mu}}^i_{j}$ as follows
%
\For{$j=0$ to $p-1$}
\[
{\hat{\mu}}^i_{_j}(l) = 
\begin{cases}
%\theta_{ \hat{\rho}^i_{j}}(\hat{\rho}^i_{j}(l)) \quad& l \in\{1,2,\ldots, m_j-1\}\\
\hat{\rho}_j^i(m_j-1) \quad& l = m_j-1 \\
\hat{\rho}_j^i(l)-\hat{\rho}_j^i(l+1) \quad& l \in\{1,2,\ldots, m_j-2\}\\
1 -  \hat{\rho}^i_{j}(1) \quad& l=0
\end{cases}
\]
%\EndFor
\State Agent $a_i$'s estimate of $x_j^*$, which is the $j^{\te{th}}$ component of the optimal solution is
\begin{equation*}
\hat{x}^i_j = \argmax\{{\hat{\mu}}^i_{_j}(l): l \in \{0,1,\ldots,m_j-1\}\}.
\end{equation*}
\EndFor

\State Agent $a_i$'s estimate of optimal $\te{x}^*$ is 
\[
\hat{\te{x}}^i = (\hat{x}^i_0,\hat{x}^i_1,\ldots,\hat{x}^i_{p-1}).
\]
\end{comment}
\end{algorithmic}
\end{algorithm}

Finally, $ \field{P}_{\Omega(\mc{X})}(\xi^i)$ is the projection operator that projects $\xi^i$ on the constraint set $\Omega(\mc{X})$. Let 
\begin{align*}
\xi^i= \nu^i- \gamma_k \partial J_i^{\te{ext}}\bigr\rvert_{\nu^i},~ 
\end{align*}
Since $\nu^i$ and $\partial J_{i}^{\te{ext}}$ are product vectors, 
\begin{align*}
\xi^i &= \prod_{j=0}^{p-1} \xi^i_j, ~~j \in \{0,1,\ldots,p-1\}\\
\xi^i_j &=\nu^i_j - \gamma_k \partial J_i^{\te{ext}}\bigr\rvert_{\nu^i_j}
\end{align*}
Thus, the projection of $\xi^i$ on $\Omega(\mc{X})$ can be decomposed into projecting each $ \xi^i_{j}$ on $[0,1]^{m_j-1}_{\downarrow} $ for which we solve the following problem.  
%The projection of $\xi^i_j$ on $\Omega_i$ can be computed by solving the following quadratic program. 
\begin{align}\label{eq:ProjectionOptimization}
\min_{\tilde{\rho} \in [0,1]^{m_j-1}} \Vert \tilde{\rho} &- \xi^i_{j}\Vert^2 \\
\te{ s.t. }~~~~~~~~ C\tilde{\rho}&\leq  0_{m_j-2}	\nonumber
\end{align}
where $0_{m_j-2} \in \field{R}^{m_j-2}$ with all entries equal to 0 and $C \in \field{R}^{(m_j-2)\times (m_j-1)}$ with entries equal to
\[
C_{uv} = 
\begin{cases}
-1 \quad & \te{if } u=v \\
1  \quad & \te{if } v=u+1\\
0  \quad & \te{otherwise}
\end{cases}
\]
The inequality constraints ensure that the solution to (\ref{eq:ProjectionOptimization}) has non-increasing entries, i.e., 
\[
\tilde{\rho}(u+1)-\tilde{\rho}(u) \leq 0 ~~\forall~~ u \in \{0,1,\ldots,m_j-2\}
\]

The vector $\rho^i$ is updated through Eqs (\ref{eq:Consensus}) and (\ref{eq:Update}) for $\te{iter}$ number of iterations. In \cite{nedic2010}, it was shown that 
\[
\rho^i[\te{iter}] \rightarrow \rho^*, \text{ as } \te{iter}\rightarrow \infty .
\]
where $\rho^*$ is the optimal solution to \ref{prob:relaxedProb}. Let $\mc{X}^* \subseteq \mc{X}$ be the set of optimal solutions for \ref{prob:distProb}. Then, $\theta_{\rho^*}(t)\in \mc{X}^*$ for all $t \in [0,1]$. Let $t^i\in[0,1]$ be the value used by agent $i$ to compute its optimal solution $\theta_{\rho^*}(t^i)$. If $|\mc{X}^*| = 1$, i.e., \ref{prob:distProb} has a unique optimal solution $\te{x}^*\in 
\mc{X}$, then $\theta_{\rho^*}(t^i) = \te{x}^*$ for all $i$. However, if $|\mc{X}^*|> 1$, $t^i$ and $t^j$ can lead to different elements in $\mc{X}^*$ for $t^i\neq t^j$. 
Therefore, if there is an additional constraint that all the agents should select the same optimal solution, we need to set
\[
t^i = \hat{t} \text{ for all } i \in \{0,1,\ldots,N-1\} 
\]
for some $\hat{t} \in [0,1]$. For practical purposes, Alg. \ref{alg:DistOpt} is executed for a limited number of iterations. In such a scenario, each agent computes its own estimate of the optimal solution in (\ref{eq:distoptSolution}) at the specified $\hat{t}$.  
%Here, $\rho_{\mu^*_{\mc{X}}}$ is the vector with respect to the probability distribution $\mu^*_{\mc{X}}$, which corresponds to an optimal solution $\te{x}^*$. Since the solution to the relaxed problem should be a minimizer for the combinatorial problem, the optimal probability measure $\mu^*_{\mc{X}}$ should be a degenerate measure corresponding to the optimal solution $\te{x}^*$, i.e., $\mu^*_{\mc{X}} = \te{e}_{\te{x}^*}$. However, if Alg. \ref{alg:DistOpt} is executed for a limited number of iterations, then $\hat{\mu}^i_{\mc{X}}$ will be an approximate of $\mu^*_{\mc{X}}$, where the quality of the estimate improves by increasing the number of iterations of the algorithm. Then, as shown in the steps  8 to 10 of the algorithm, $a_i$'s estimate of the optimal vector $x^*_j$ corresponds to the index of $\hat{\mu}^i_j $ that has maximum probability. 

In Alg. \ref{alg:DistOpt}, there are two primary operations that an agent performs in every iterations. The first operation is the computation of subgradient, which is computed through Alg. \ref{alg:GreedyAlg}. The complexity of this algorithm was already discussed in the previous section. 
The second operation is the projection of the updated estimate of the optimization vector on the constraint set by solving (\ref{eq:ProjectionOptimization}). This is an isotonic regression problem  and can be solved by using any efficient optimization solver package. 

Next, we present our second main contribution, which is to show that a class of distributed motion coordination problems over discrete domain can be formulated as submodular minimization problems over chain products.
\section{Distributed Motion Coordination over Discrete Domain}
 
As outlined in the classical ``boids'' model in \cite{reynolds1987}, the motion of an individual agent in a multiagent system should be a combination of certain fundamental behaviors. These behaviors include collision avoidance, cohesion, and alignment. Cohesion corresponds to the tendency of the agents to remain close to each other, and alignment refers to the ability of the agents to align with a desired orientation and reach a desired goal point. In addition to these behaviors, agents should be able to avoid any obstacles in the environment. 
%We will argue that all of these behaviors can be effectively modeled as a submodular minimization problem.

We demonstrate that the behaviors in the ``boids'' model can be modeled as submodular minimization problems over discrete domain. In particular, we design potential functions whose minima correspond to the desired behaviors like reaching a particular point, obstacle avoidance, and collision avoidance. Then, we prove that these potential functions are submodular over a lattice of chain products. The advantage of using submodular functions for designing potential functions is that their subgradient can be computed in polynomial time through Alg. \ref{alg:GreedyAlg}. Therefore, we can minimize these functions efficiently through subgradient descent algorithms. Moreover, the framework proposed in this work can be employed for the distributed minimization of the potential functions. 

We establish the effectiveness of submodular functions in distributed motion planning and coordination problems through an example setup, which is inspired from capture the flag game as presented in \cite{D'Andrea2003} and \cite{chasparis2008}. Capture the flag is a challenging setup that involves two teams of agents competing against each other. The members of the same team need to collaborate with each other to devise a mobility strategy that can stop the opposing team from achieving their objective. Thus, the setup has both collaborative and adversarial components involved in decision making, which makes it a challenging problem even for simple cases. 
We want to highlight that our objective is not to provide a solution to the well studied capture the flag game. Instead, our objective is to show that the proposed framework can be used effectively for such complex motion coordination problems over a discrete state space. %Therefore, we will tailor the setup of this game such that the players require all the behaviors of Reynolds' boids in order to complete their objective. 

\subsection{Problem Formulation}
The game is played between two teams of agents, offense and defense, over a time interval of length $T$. We will refer to the members of the offense and defense teams as attackers and defenders respectively. The arena is a square region of area $N_g^2$ that is discretized into a two dimensional grid having $N_g \times N_g$ sectors as shown in Fig. \ref{fig:Arena}. The discretized arena is represented by a set $\mc{G} = G \times G$, which is an integer lattice, i.e., each $z = (x,y)$ in $\mc{G}$ is a vector in $\field{R}^2$, where $x$ and $y$ belong to $G = \{0,1,2,\ldots,N_g-1\}$.  

A flag is assumed to be placed in the arena and the area surrounding it is declared as a defense zone. The defense zone ${D}=\{z^f_0,z^f_1,\ldots,z^f_{n_f-1}\}$ is a subset of $\mc{G}$ with $n_f$ points. The points in $D$ are stacked in a vector 
\[
\te{z}^f=(z^f_0,z^f_1,\ldots,z^f_{n_f-1}),~~~\te{z}^f \in \field{Z}^{2n_f}
\]
in which each $z^f_i =(x^f_i,y^f_i)$ is a point in $\field{Z}^2$.  %(\te{x}^f,\te{y}^f)$, where 
%\begin{equation*}
%\te{x}^f = (x^f_0,\ldots,x^f_{n_f-1}),~~~\te{y}^f= (y^f_0,\ldots,y^f_{n_f-1}).
%\end{equation*}
The grid points of the shaded region at the top of Fig. \ref{fig:Arena} comprise the defense zone. The objective of the offense is to capture the flag. The flag is considered captured if any attacker reaches a point in the defense zone. Once the flag is captured, the game stops and the offense wins. On the other hand, the objective of the defense is to stop the attackers from entering the defense zone either by capturing them or forcing them away. To defend the defense zone, there needs to be collaboration and cohesion among the defenders. An attacker is in captured state if its current location is shared by at least one defender. However, if that defender moves to a different location, the state of the attacker switches from captured to active. If no attacker can enter the defense zone for the duration of the game, the defense wins.

Let $P = \{a,d\}$ be a set of teams where $a$ and $d$ correspond to the teams of attackers and defenders respectively. 
Let $n_p$ be the number of players and $p_i$ be the $i^{\te{th}}$ player in team $p \in P$. The locations of all the players in a team at time $k$ are stacked in a vector $\te{z}^{p}(k)\in \field{Z}^{2n_p}$ where the location of $p_i$ at time $k$ is $z^p_i(k) =(x_i^{p}(k) ,y_i^{p}(k) )$.
\begin{figure}[t!]
	\centering
	\includegraphics[trim= 10cm 2cm 0cm 2.5cm,clip, scale=0.3]{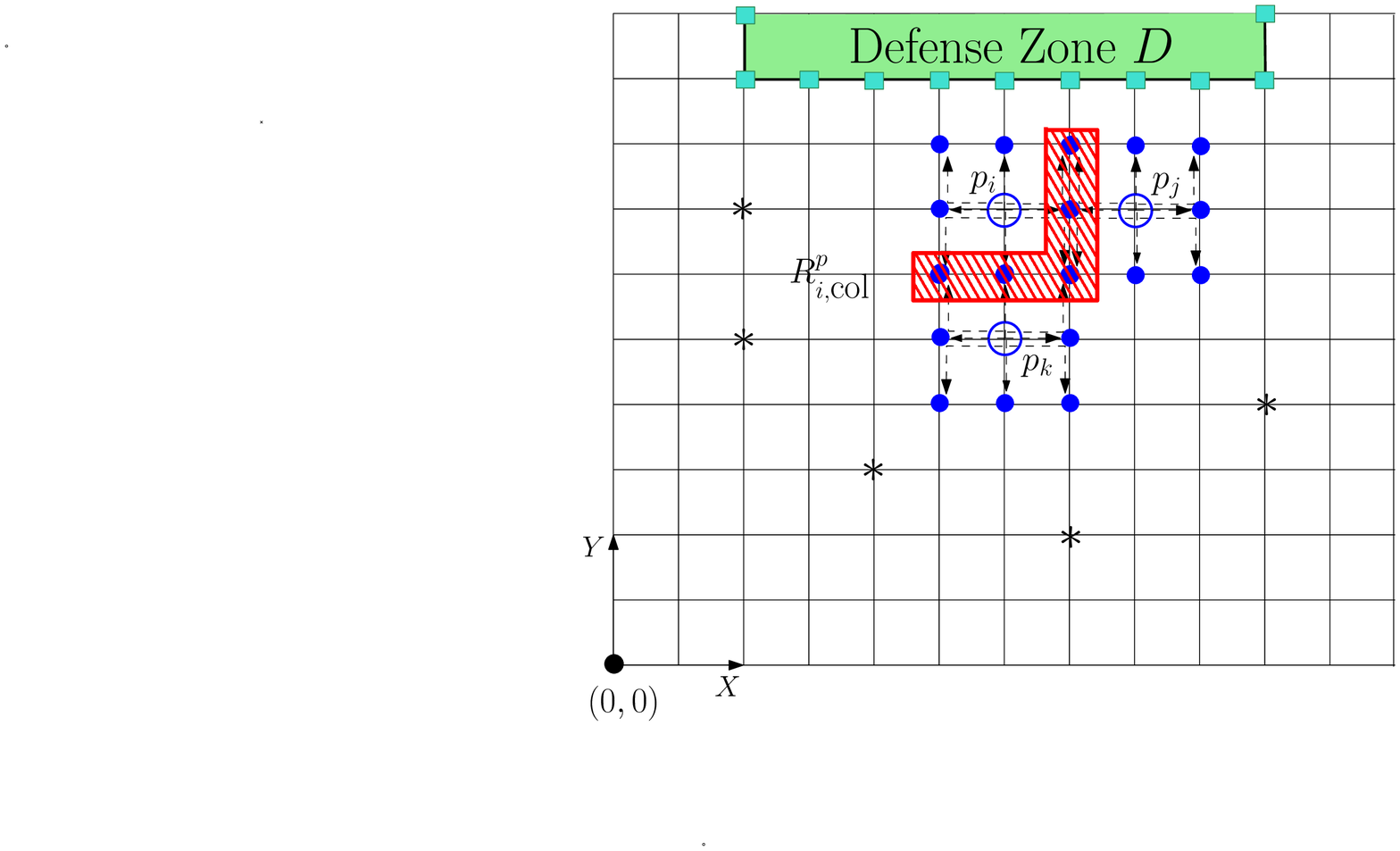}
	\caption{Layout of the playing arena. }
	\label{fig:Arena}
\end{figure}
The update  equation for $p_i$ is
\begin{align*}
z^p_i(k+1) &= z^p_i(k) + {u^p_{i}}(k), 
%y^p_i(k+1) &= y^p_i (k)+ {u^p_{y,i}}(k)
\end{align*}
where $u^p_{i}(k) = ( u^p_{x,i}(k), u^p_{y,i}(k)) \in {U}^p_x \times {U}^p_y$,
\begin{align*}
{U}_x^p & = \{0,\pm1,\ldots,\pm u_x^{\max}\},\\
{U}_y^p & = \{0,\pm1,\ldots,\pm u_y^{\max}\}.
\end{align*} 
Let $U^p_i = U^p_{x,i} \times U^p_{y,i}$ be the input set of player $p_i$. Then, the reachable set of $p_i$ at time $k$ is 
\begin{align*}
{R}^p_i(k) =\{z \in \mc{G}: z = z^p_i(k) + u^p_{i}, ~~u^p_{i} \in U^p_i\},
\end{align*}
i.e., ${R}^p_i(k)$ is the set of all points that $p_i$ can reach in one time step. For notational convenience, we will drop $k$ from the arguments. 
We assume that all the players are homogeneous, i.e., $U^p_i = U\times U$ for all $p \in P$ and $i \in \{0,1,\ldots,n_p\}$ where $U \subset \field{Z}$.
%\begin{equation}\label{eq:U}
%U = U_x \times U_y
%\end{equation}
The reachable sets of players with $u_x^{\max} = u_y^{\max}= 1$ are depicted in Fig. \ref{fig:Arena}. 

Two players can collide if their reachable sets overlap with each other. Let 
\begin{align*}
{R}^p_{i,\te{col}} =\bigcup_{\substack{j = 1 \\ j \neq i}}^{n_p} \left({R}^p_i \cap {R}^p_j \right).
\end{align*}
 ${R}^p_{i,\te{col}}$ is the set of all points in ${R}^p_i$ that can result in a collision between $p_i$ and the members of its team. The shaded region in Fig. \ref{fig:Arena} depicts ${R}^p_{i,\te{col}}$ for $p_i$. To make the game more challenging and to add obstacle avoidance, we assume that some point obstacles are placed in the arena. Let 
\[
\mc{O} = \{z^{\te{obs}}_1,\ldots,z^{\te{obs}}_{n_{\te{obs}}-1}\}
\]
be the set of obstacle locations. %The locations of the obstacles are stacked in a vector $\te{z}^{\te{obs}} \in \field{Z}^{2n_{\te{obs}} }$. 
In Fig. \ref{fig:Arena}, the obstacles are represented by asterisks. 

For player $p_i$, we combine the sets of points of possible collisions with other players and with obstacles as follows. 
\begin{align}\label{set:Avoid}
{R}^p_{i,\te{avoid}} ={R}^p_{i,\te{col}} \cup (R^p_i(k)\cap \mc{O})
\end{align}
One approach to guarantee collision avoidance is to limit the effective reachable set of $p_i$ to ${R}^p_i -{R}^p_{i,\te{avoid}}$, which is the set of points of ${R}^p_i$ that are not included in ${R}^p_{i,\te{avoid}}$. To avoid the points in ${R}^p_{i,\te{avoid}}$, we compute $X^p_{i,\te{avoid}}$ and $Y^p_{i,\te{avoid}}$. These are sets of collision avoidance planes  along $x$ and $y$ direction such that avoiding these entire planes guarantee that $z^p_i \in {R}^p_i -{R}^p_{i,\te{avoid}}$ in the next time step. We explain collision avoidance planes through examples in Fig. \ref{fig:collision}. In both the cases, the shaded regions are the sets where collisions can occur. In Fig. \ref{fig:collision}(a), if $p_i$ avoids the entire plane $x = x^p_i + 1$ and $p_j$ avoids the entire plane $x=x^p_j-1$, then $p_i$ and $p_j$ cannot collide at time $k+1$. In this case
\begin{align*}
X^p_{i,\te{avoid}} &= \{x^p_i + 1\},~Y^p_{i,\te{avoid}} = \{\}\\
 X^p_{j,\te{avoid}} &= \{x^p_j - 1\},~Y^p_{j,\te{avoid}} = \{\}
\end{align*}
Similarly, the avoidance planes in Fig. \ref{fig:collision}(b) are
\begin{align*}
X^p_{i,\te{avoid}} &= \{x^p_i + 1\},~Y^p_{i,\te{avoid}} = \{y^p_i + 1\}\\
X^p_{j,\te{avoid}} &= \{x^p_j - 1\},~Y^p_{j,\te{avoid}} = \{\} \\
X^p_{l,\te{avoid}} &= \{\}, Y^p_{l,\te{avoid}} = \{y^p_l - 1\}
\end{align*}
For $u_x^{\max} = u_y^{\max} =1$, the sets $X^p_{i,\te{avoid}}$ and $Y^p_{i,\te{avoid}}$ can be computed easily.

Next, we formulate the problem form the perspective of defense team . In the game setup, we assume that at time $k$ each defender knows the current locations of all the attackers. However, any mobility strategy for the defense team inherently depends on the strategy of the attack team, which is unknown to the defenders. Therefore, we implement an MPC based online optimization strategy, in which the defenders assume a mobility model for attackers over a prediction horizon. At each time $k$, the online optimization problem has the following structure. 
\begin{align} \label{prob:Problem_Online}
\min_{\te{u}^d \in \mc{U}^d} ~~~~&J(\te{z}^{d},\te{u}^d,(\te{z}^a)^+,\te{z}^f,\te{z}^{\te{obs}})\nonumber  \\
\text{s.t.} ~~~~~ &(\te{z}^{d})^+  = \te{z}^d + \te{u}^d . \tag{$\mc{P}2$} 
\end{align}
where $\mc{U}^d = \prod\limits_{i = 0}^{n_d-1} U^d_i$.  

In this problem formulation, $\te{z}^a$ and $\te{z}^d$ are the location vectors of the offense and defense teams at time $k$, and  $(\te{z}^a)^+$ is the location vector of offense at time $k+1$. To solve \ref{prob:Problem_Online}, defenders still need to know  $(\te{z}^a)^+$, which cannot be known at current time. Therefore, defenders assume a mobility model for attackers. The assumed model can be as simple as a straight line path form the current location of an attacker to the defense zone. The model can also be a more sophisticated like a feedback strategy as presented in \cite{chasparis2008}. 
%One possible model for attackers is that they will always move towards the defense zone in a straight line. We will use this model for the simulations in the next section. 
%We adopt the same model proposed in \cite{chasparis2008} in which the defenders assume a static feedback controller as a mobility model for attackers.

The cost function $J$ in \ref{prob:Problem_Online} is 
\begin{align*}
J(\te{z}^{d},\te{u}^d,(\te{z}^a)^+,\te{z}^f, \te{z}^{\te{obs}})  = \sum_{i=0}^{n_d-1}J_i(\te{z}^{d},\te{u}^d,(\te{z}^a)^+,\te{z}^f, \te{z}^{\te{obs}}) 
\end{align*}
The total cost is the sum of the costs of the individual agents. The local cost of an agent is
\begin{multline}\label{eq:J_total}
%J_i(\te{z}^{d},\te{u}^d_z,(\te{z}^a)^+,\te{z}^f, \te{z}^{\te{obs}}) 
J_i =  \alpha_i^f  J_i^f(z_i^d,u^d_{i},\te{z}^f) +  \alpha_i^a J_i^a(z_i^d,u^d_{i},(\te{z}^a)^+)+ J_i^d(\te{z}^{d}, \te{u}^d)+ \\J_i^{\te{avoid}}(\te{z}^{d},u^d_i,\te{z}^{\te{obs}}) + J_i^{\te{mob}}(u^d_i) .
\end{multline}
\begin{figure}[t] 
	\centering
	\includegraphics[trim= 0cm 0cm 0cm 0cm,clip, scale=0.5]{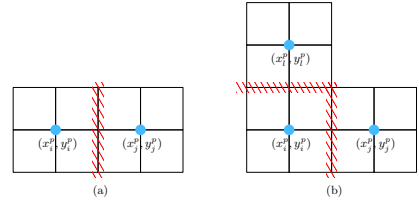}
	\caption{Collision avoidance planes.} 
	\label{fig:collision}
\end{figure}
To avoid notational clutter, we will ignore function arguments unless necessary. The terms comprising the cost function are defined as follows. 
\begin{align}
J_i^f&=  \sum_{h = 0}^{n_f-1}w^f_{ih} d((z_i^d)^+,z_h^{{f}}) \label{eq:Jf}\\
J_i^a&=\sum_{g = 0}^{n_a-1}w^a_{ig} d((z_i^d)^+,(z_g^a)^+), \label{eq:Ja}\\
J_i^d &=  \sum_{j = 0}^{n_d-1}w^d_{ij} d((z_i^d)^+,(z_j^d)^+), \label{eq:Jd}\\
%J_i^{\te{obs}} &= \sum_{q = 0}^{n_{\te{obs}}-1}d_{\te{obs}}((z_i^d)^+,z_q^{\te{obs}})\label{eq:Jobs}\\
J_i^{\te{avoid}} &= \sum_{c_x \in {X}^d_{i,\te{avoid}}}  d_{x,\te{avoid}}((z_i^d)^+,c_x)\nonumber \\
&~~~~~~~ +\sum_{c_y \in {Y}^d_{i,\te{avoid}}}  d_{y,\te{avoid}}((z_i^d)^+,c_y)\label{eq:Javoid}\\
J_i^{\te{mob}}(u^d_i) &= w^u_i(|u^d_{x,i}|^2+|u^d_{y,i}|^2) \label{eq:Jmobility}
\end{align}
In the cost functions, $w^f_{ih}$, $w^a_{ig}$, $w^d_{ij}$, and $w^u_i$ are non-negative weights. The function $ d(z_i,z_j)$ is a distance measure between points $z_i$ and $z_j$. It can be either of the following two functions.
\begin{align}\label{eq:distance}
d(z_i,z_j) &= (x_i-x_j)^2+(y_i-y_j)^2\nonumber\\
d(z_i,z_j) &= |x_i-x_j|+|y_i-y_j|.
\end{align}
The functions $d_{x,\te{avoid}}$ and $d_{y,\te{avoid}}$ are
\begin{align*}
%d(z_i,z_j) &= \Vert z_i - z_j \Vert^2\\
d_{x,\te{avoid}}(z_i^d,c_x) &= \zeta_1e^{-\zeta_2 (x_i^d - c_x)^2}\\
d_{y,\te{avoid}}(z_i^d,c_y) &= \zeta_1e^{-\zeta_2 (y_i^d - c_y)^2}
\end{align*}

%\begin{align}\label{eq:d_obs}
%d_{\te{obs}}(z_i^d,z_q^{\te{obs}}) = \zeta_1e^{-\zeta_2 d(z_i^d,z_q^{\te{obs}})}
%\end{align}
with $\zeta_1 \geq 1$ and $\zeta_2\ge1$. 

The local cost of each defender has five components, each of which is a potential function with the minimum value at the desired location. The first and the second terms jointly model the behavior of a defender. The term $J_i^f$ in  (\ref{eq:Jf}) models defensive behavior in which $d_i$ stays close to the defense zone to protect it. The constant weight $w^f_{ih}\geq 0$ is the strength of attractive force between $d_i$ and the point $z^{f}_h$ in the defense zone. The cost term $J_i^f$  is minimized when $z^d_i$ is equal to a weighted average of the points in the defense zone. We assume that each defender $d_i$ is assigned the responsibility of a subset $D_i$ of the defense zone $D$, where 
\[
D_i \subseteq D~~~\te{ and } ~~~\bigcup_{i=1}^{n_d} D_i =D.
\]
The weights are assigned as follows. 
\begin{equation}
w^f_{ih} =
\begin{cases}
\frac{1}{|D_i|} &\quad z^f_h \in D_i \\
0 &\quad \te{otherwise}
\end{cases}
\end{equation}

The term $J^a_i$ defined in  (\ref{eq:Ja}) models attacking behavior of the defenders. In this mode, the defenders actively pursue the attackers and try to capture them before they reach the defense zone. The function $J_i^a$ is a weighted sum of square of the distances between $d_i$ and the locations of the attackers at the next time step. For the simulations in the next section, we assume that defender $d_i$ only pursues the attacker that is closest to ${D}_i$. Let
%For defender $d_i$, we assign a weight to an attacker that is inversely proportional to the square of its minimum distance from $D_i$. Let
\begin{align*}
\delta(d_i,a_g) &= \min \{{d(z^{f}_h,z^a_g}): z^f_h\in D_i\}\\
\delta(d_i) &= \min\{\delta(d_i,a_g) : g \in \{1,\ldots,n_a\}\}.
\end{align*}
Here $\delta(d_i,a_g)$ is the minimum Manhattan distance of attacker $a_g$ from $D_i$ and $\delta(d_i) $ is the minimum of the distances of all the attackers from $D_i$. Then 
\begin{equation}\label{eq:weightAttacker}
w^a_{ig} =
\begin{cases}
c &\quad \delta(d_i,a_g) =  \delta(d_i) \\
0 &\quad \te{otherwise}
\end{cases}
\end{equation}
where $c$ is a scalar. In case of a tie, $d_i$ selects an attacker randomly and starts pursuing it. 

The behavior of each defender can be selected to be a  combination of these two terms by tuning the parameters $\alpha_i^a$ and $\alpha^f_i$ such that 
\[
\alpha_i^a + \alpha_i^f = 1.
\]
The values $\alpha^a_i= 1$ or $\alpha^f_i = 1$ corresponds to purely attacking or defensive behaviors for $d_i$. If these parameters are constant, the behavior of the defenders remain the same through out the game. We can also have an adaptive strategy based on feedback for adjusting the behavior of each defender. 
Let $\delta_{\text{th}}$ be a threshold at which the  behavior of a defender switches between attack and defense modes. Let $\alpha^a_{\text{nom}}$ and $\alpha^f_{\text{nom}}$ be the nominal weights assigned to $J_i^a$ and $J_i^f$ respectively at $\delta(d_i,k) = \delta_{\text{th}} $ such that 
\[
\alpha^a_{\text{nom}} + \alpha^f_{\text{nom}} = 1.
\]
Then 
\begin{align}\label{eq:alpha}
\alpha_i^a &=\frac{ \alpha^a_{\text{nom}} e^{\beta(\delta_{\te{th}} -\delta(d_i))}}{\alpha^a_{\text{nom}} e^{\beta(\delta_{\te{th}} -\delta(d_i))} + \alpha^f_{\text{nom}}}, \\
\alpha_i^f &=\frac{ \alpha^f_{\text{nom}}}{\alpha^a_{\text{nom}} e^{\beta(\delta_{\te{th}} - \delta(d_i))} + \alpha^f_{\text{nom}}}.\nonumber
\end{align}
where $\beta \in [0,1]$ is a constant value. 

For defender $d_i$, if $\delta(d_i)= \delta_{\te{th}}$, the parameters $\alpha_i^a$ and $\alpha_i^f$ are equal to their nominal values. If an attacker gets closer to $D_i$ than $\delta_{\te{th}}$, i.e.,  $\delta(d_i) < \delta_{\te{th}}$, the value of $\alpha_i^a$ increases exponentially and the value of $\alpha^f$ decreases. Thus, as the attackers move towards $D_i$, the weight assigned to $J_i^a$ increases, and the behavior of $d_i$ shifts towards attacking mode . However, if the attackers are not close to the $D_i$, i.e., $\delta(d_i) >\delta_{\te{th}}$, then the value of $\alpha^a$ reduces exponentially and the behavior of $d_i$ becomes more defensive. 

The third term  $J_i^d$ defined in  (\ref{eq:Jd}) generates cohesion among the defenders. Minimizing $J_i^d$ drives $d_i$ towards the weighted average of the locations of all the other defenders at time $k+1$. We assume that the weights $w^d_{ih}$ are positive, i.e., 
\[
w^d_{ih} > 0 \text{ for all } i , h \text{ in } \{1,2,\ldots,n_d\}.
\]
$J^d_i$ depends on the next locations of all the defenders. We assume that the each defender knows that current locations of all of its teammates. However, it does not know the behavior parameters of other defenders, i.e., $\alpha_i^a$ and $\alpha_i^f$ are private parameters of each player. Therefore, we need to implement a distributed optimization algorithm to minimize $J^d_i$. We will show through simulations that the proposed algorithm Alg. \ref{alg:DistOpt} can be used effectively to minimize $J^d_i$. 

The fourth term $J_i^{\te{avoid}}$ defined in  (\ref{eq:Javoid}) guarantees obstacle and collision avoidance. The function $d_{x,\te{avoid}}((x^d_i)^+,c_x)$ is maximum when $(x^d_i)^+ = c_x$, where $c_x \in X^d_{i,\te{avoid}}$. Similarly, $d_{y,\te{avoid}}((y^d_i)^+,c_y)$ is maximum when $(y^d_i)^+ = c_y$, where $c_y\in Y^d_{i,\te{avoid}}$. By selecting $\zeta_1$ large enough, we can guarantee that $d_i$ avoids the planes in $X^d_{i,\te{avoid}}$ and $Y^d_{i,\te{avoid}}$, which ensures that it avoids $R^d_{i,\te{avoid}}$. Thus, minimizing (\ref{eq:Javoid}) guarantees collision and obstacle avoidance. The purpose of $\zeta_2$ is to control the region of influence of this barrier potential. Finally, the fifth term $J_i^{\te{mob}}$ in (\ref{eq:Jmobility}) is the mobility cost of $d_i$. 

Problem \ref{prob:Problem_Online} is a combinatorial optimization problem because the set of inputs is discrete. We will now prove that the cost $J$ in  (\ref{eq:J_total}) is submodular and \ref{prob:Problem_Online} is a submodular minimization problem. 

\begin{thm}\label{thm:Cost_Submodular}
{\it Problem \ref{prob:Problem_Online} with the cost function defined in (\ref{eq:J_total})-(\ref{eq:Jmobility}) is a submodular minimization problem over 
\[
\mc{U} = \prod_{i=0}^{n_d-1} U_{x,i}\times U_{y,i} 
\]
where $U_{x,i}$ and $U_{y,i}$ are equal to 
\[
U =\{-u_{\max},\ldots,0,\ldots,u_{\max}\} \text{ for all } i.
\]
}
\end{thm}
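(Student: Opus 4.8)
The plan is to reduce submodularity of the full cost to coordinatewise cross-difference checks, exploiting that summation preserves submodularity (as already noted for \ref{prob:distProb}) and that the decision lattice $\mc{U}$ is a product of the chains $U\subset\field{Z}$, so that the integer criterion (\ref{eq:antitoneInt}) applies. Since $J=\sum_i J_i$ and each $J_i$ is itself the sum of the five terms in (\ref{eq:J_total}), it suffices to verify that each individual term is submodular over $\mc{U}$. Throughout I would use the update $(z^d_i)^+=z^d_i+u^d_i$, which makes $(x^d_i)^+=x^d_i+u^d_{x,i}$ depend only on the single coordinate $u^d_{x,i}$ and $(y^d_i)^+=y^d_i+u^d_{y,i}$ only on $u^d_{y,i}$.

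First I would dispose of the terms $J_i^f$, $J_i^a$, $J_i^{\te{mob}}$ and $J_i^{\te{avoid}}$. Inspecting (\ref{eq:Jf}), (\ref{eq:Ja}), (\ref{eq:Jmobility}) and (\ref{eq:Javoid}), each depends only on the two coordinates $u^d_{x,i}$ and $u^d_{y,i}$ of the single defender $d_i$, since the defense-zone points $\te{z}^f$, the predicted attacker positions $(\te{z}^a)^+$, and the avoidance planes $X^d_{i,\te{avoid}},Y^d_{i,\te{avoid}}$ are all constants at decision time $k$. Moreover each of these terms splits as a function of $u^d_{x,i}$ alone plus a function of $u^d_{y,i}$ alone, i.e.\ it is \emph{separable}. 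For a separable function every mixed cross-difference of (\ref{eq:antitoneInt}) over two \emph{distinct} coordinates vanishes identically, so the defining inequality holds with equality; hence these four terms are submodular. I would stress that this argument uses no convexity, so the non-convex Gaussian barrier $\zeta_1 e^{-\zeta_2(\cdot)^2}$ in $J_i^{\te{avoid}}$ is harmless, each of its summands still involving a single coordinate.

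The substance lies in the cohesion term $J_i^d$ of (\ref{eq:Jd}), the only term coupling distinct defenders. Writing a generic summand as $d((z^d_i)^+,(z^d_j)^+)=g\bigl((x^d_i-x^d_j)+u^d_{x,i}-u^d_{x,j}\bigr)+g\bigl((y^d_i-y^d_j)+u^d_{y,i}-u^d_{y,j}\bigr)$, I would observe that both admissible choices of $d$ in (\ref{eq:distance}) are of this form with $g(r)=r^2$ or $g(r)=|r|$, each a convex function of a single coordinate difference. The key lemma I would establish is that for any convex $g$ the map $g(c+p-q)$ is submodular in the pair $(p,q)$: substituting into the two sides of (\ref{eq:antitoneInt}), the cross-difference collapses to $2g(m)-g(m+1)-g(m-1)$ with $m=c+p-q$, which is $\le 0$ by midpoint convexity. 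Any cross-difference of $g\bigl(c+u^d_{x,i}-u^d_{x,j}\bigr)$ involving a coordinate other than $u^d_{x,i},u^d_{x,j}$ vanishes, so each summand is submodular over the entire lattice; since $w^d_{ij}>0$ and nonnegative scaling and summation preserve submodularity, $J_i^d$ is submodular.

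Combining the four separable terms with $J_i^d$ shows each $J_i$, and hence $J=\sum_i J_i$, is submodular over the chain product $\mc{U}$, so \ref{prob:Problem_Online} is a submodular minimization problem. The one point requiring care is the boundary of the chain $U$: the increments in (\ref{eq:antitoneInt}) need only hold where $u^d_{x,i}+1$ and the like remain in $U$, and the convexity inequality holds wherever both sides are defined, so the boundary is no obstacle. I expect the convex-difference lemma for $J_i^d$ to be the main, and essentially the only, content of the argument.
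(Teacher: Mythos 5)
Your proof is correct and follows the paper's overall strategy---term-by-term verification of the cross-difference criterion (\ref{eq:antitoneInt}) over the integer chain product, with submodularity of the total cost following from closure under nonnegative sums---but the key computation is genuinely different. The paper proves directly that $d(\te{z})=(x_i-x_j)^2+(y_i-y_j)^2$ is submodular as a function of all four coordinates by expanding the quadratic cross-differences (obtaining $-2$ when both incremented coordinates lie in the same axis pair and $0$ otherwise), applies that single fact uniformly to $J_i^f$, $J_i^a$, and $J_i^d$, and merely asserts that ``the same sequence of steps'' handles the Manhattan distance. You instead dispose of $J_i^f$, $J_i^a$, $J_i^{\te{avoid}}$, and $J_i^{\te{mob}}$ by separability (each is a sum of functions of a single decision coordinate, so every mixed cross-difference vanishes identically), and isolate the only genuinely coupled term, $J_i^d$, which you treat with the lemma that $g(c+p-q)$ is submodular in $(p,q)$ for any convex $g$, via $2g(m)\leq g(m+1)+g(m-1)$. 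This buys two things: it covers the squared-Euclidean and Manhattan cases (and any other convex profile) in one stroke, actually supplying the verification the paper leaves implicit for $|\cdot|$, and it exposes the structural reason the pairwise distance terms are submodular. The paper's direct expansion is more elementary but tied to the quadratic; your route is more general and, for the Manhattan case, more complete. Your closing remark about increments at the boundary of the chain $U$ is a point of care the paper does not mention, and it is handled correctly.
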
 
\begin{proof}
Since $U$ is a subset of $\field{Z}$, we can use the criterion in  (\ref{eq:antitoneInt}) to verify submodularity of the the cost function. From (\ref{eq:J_total}), the cost of each agent $J_i$ is a summation of five terms. We will show that each of these terms is submodular. Then, using the property that sum of submodular functions is also submodular, we prove the theorem.

The terms $J^f_i$, $J^a_i$, and $J^d_i$ are weighted sums of the distance functions in (\ref{eq:distance}). We verify that 
\[
d(\te{z}) = (x_i - x_j )^2 + (y_i - y_j)^2, 
\]
is submodular for any $\te{z} = (x_i,y_i,x_j,y_j)$. To prove that $d(\te{z})$ is submodular, we need to show that
\[
d(\te{z}+\te{e}_p + \te{e}_q) - d(\te{z} + \te{e}_q) \leq d(\te{z} + \te{e}_p) - d(\te{z} ).
\]
where $\te{e}_p$ and $\te{e}_q$ are unit vectors of dimension four. 

The first scenario is that both $\te{e}_p$ and $\te{e}_q$ increment either the $x$ or the $y$ components in $\te{z}$. Without loss of generality, we assume that the $x$ components are incremented, i.e., $p = 1$ and $q = 3$. Then, 
\begin{multline*}
[d(\te{z}+\te{e}_p + \te{e}_q) - d(\te{z} + \te{e}_q)] - [d(\te{z} + \te{e}_p) - d(\te{z} )] = \\
  -(1-2(x_i - x_j))  - (1 + 2(x_i - x_j)) = -2
\end{multline*}
The second scenario is that out of $p$ and $q$, one corresponds to an $x$ component and the other corresponds to a $y$ component. Let $p = 2$ and $q = 3$. Then, 

\begin{multline*}
[d(\te{z}+\te{e}_p + \te{e}_q) - d(\te{z} + \te{e}_q)] - [d(\te{z} + \te{e}_p) - d(\te{z} )] = \\
(1+2(y_i - y_j))  - (1 + 2(y_i - y_j)) = 0
\end{multline*}

Thus, the condition in (\ref{eq:antitoneInt}) is satisfied for all possible scenarios, which proves that the function $d(\te{z})$ is submodular. The same sequence of steps can be followed to verify the submodularity of Manhattan distance. 

The function $J^i_{\te{avoid}}$ is the summation of the terms $d_{x,\te{avoid}}$ and $d_{y,\te{avoid}}$. Since each of these terms is only a function of single decision variable, the second order comparison condition in (\ref{eq:antitoneInt}) will be satisfied with equality. The same argument is valid for $J^{\te{mob}}$.
Since all the functions in $J_i$ are submodular, $J_i$ is submodular for all $i$, which concludes the proof.

\end{proof}
	
\subsection{Discussion}
\begin{enumerate}
\item To solve \ref{prob:Problem_Online}, defender $d_i$ can minimize the terms $J_i^{\te{avoid}}$, $J^a_i$, and $J^f_i$ locally without coordinating with its team members. All of these terms only depend on the quantities that are known to $d_i$ at time $k$, i.e., locations of obstacles, current locations of other defenders, locations of attackers, and the locations of the defense zone. Although $d_i$ does not know the exact locations of the attackers, it uses an estimate of these locations based on the current locations of the attackers and the assumed mobility model.

\item The term $J_i^d$ depends on the future locations of all the defenders and cannot be minimized locally. Thus, we need a distributed optimization algorithm to solve \ref{prob:Problem_Online}. We assume that each defender can only communicate with a subset of the members of the defense team and the adjacency matrix representing the communication network topology satisfies the assumptions presented in Section \ref{sec:Distributed Submodular Minimization}. Since \ref{prob:Problem_Online} is an online optimization problem, the defenders need to execute Alg. \ref{alg:DistOpt} at each decision time to compute an update action. 

\item By avoiding the planes in $X^d_{i,\te{avoid}}$ and $Y^d_{i,\te{avoid}}$, collision and obstacle avoidance is guaranteed. Although, this approach for avoiding obstacles and collisions can be  conservative, the objective here is to demonstrate that we can generate repulsive forces and implement avoidance behaviors using submodular functions. For practical scenarios, we can always design sophisticated protocols by assigning priorities to agents in the case of deadlocks.   

\item Submodularity yields significant savings in terms of the number of computations as $n_d$ increases. For example, if $n_d = 4 $ and $u_{\max} = 1$, the dimension of the problem for each agent is $9^4=6561 $, since $| U\times U| = 9$. However, the subgradient can be computed in $r\log(r)+r$ computations. For $n_d = 4$ and $|U| = 3$, we have $r = 16$ from (\ref{eq:r}). Since we are solving an online optimization problem under uncertainties, executing 15 to 20 iterations of the distributed optimization algorithm should be sufficient to compute an approximate solution for a problem of this size.  
%To be compatible with the presentation in this paper, let $U_x$ and $U_y$ be the index sets for the input sets of $U^d_x$ and $U^d_y$. Then, Alg. \ref{alg:DistOpt_Online} is the online version of the distributed optimization algorithm presented in the previous section. 
\end{enumerate}

%The problem formulations in $\mc{P}$ and $\mc{P}1$ covers all the major requirements of any motion coordination problem in multiagent systems. We can generate collision free paths for multiple agents starting from given initial conditions to terminal conditions while avoiding any obstacles in the environment. Moreover, the online implementation presented in Algorithm 3 provides the capability of handling unmodeled system dynamics and uncertainties in the environment. Thus, we can claim with confidence that submodular minimization is a natural paradigm for modeling motion coordination problems in multiagent systems over discrete state space. Moreover, the framework proposed in this work can effectively solve motion coordination algorithms over discrete state space in a distributed manner. 

\begin{algorithm}[t!]
\caption{\bf Online Distributed Submodular Minimization }\label{alg:DistOpt_Online}
\begin{algorithmic}[1]
\Statex At each decision time $k$, $d_i$ needs to perform the following steps:
\For {$k=0$ to $T-1$}
%\State Select the initial estimate $\te{u}^i \in \prod_{i=1}^{n_d} U_{x,i}\times U_{y,i}$ for all $i$.
%\State Apply Algorithm 2 with $\te{u}^i$ as initial condition and $\bar{\te{u}}^i$ as output.
\State Apply Alg. \ref{alg:DistOpt} with $\hat{\te{u}}^i$ as output. 
\State Update the state 
\begin{equation*}
z^d_i(k+1) = z^d_i(k) + \hat{u}^i_{i}. 
\end{equation*}

\EndFor 
\end{algorithmic}
\end{algorithm}

\section{Simulation}\label{sec:Simulation}
We simulated the capture the flag game with the following setup.  The size of the grid was $20\times 20$ and the game was played over a time interval of length $T = 40$. The defense zone was located at the top of the field.
%\begin{align*}
%D = \{(4,20),(4,19),(5,19),\ldots,(17,19),(17,20)\}
%\end{align*}
The number of players in both the teams was four, i.e., $n_d = n_a = 4$. %The initial locations of the attackers and defenders were 
%\begin{align*}
%\te{x}^d(0) &= (7,9,11,12), ~~\te{y}^d(0) = (18,18,18,18).\\
%\te{x}^a(0) &= (2,10,15,20),~~ \te{y}^a(0) = (2,2,2,2).
%\end{align*}
There were six point obstacles placed in the field. 
%\begin{align*}
%\te{x}^{\te{obs}} =(4,8,6,9,13,12), ~~ \te{y}^{\te{obs}} = (13,10,14,14,13,15) 
%\end{align*}
%
\begin{figure*}[t] 
	\centering
	\subfigure[Playing arena ]
	{
		\includegraphics[trim= 3cm 0cm 0cm 0cm,clip, scale=0.17]{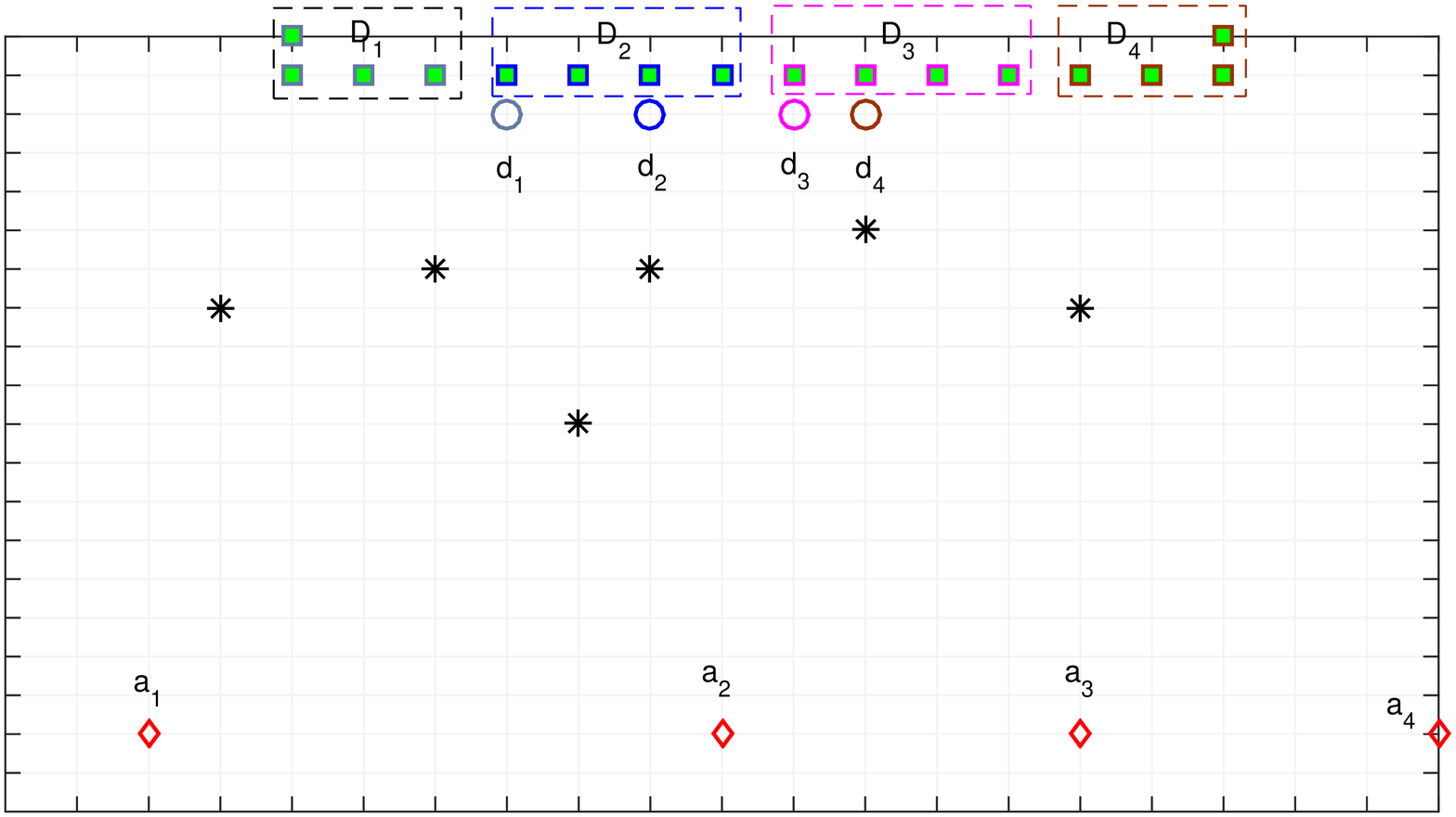}\label{fig:Layout}
	}
	\subfigure[$\delta_{\te{th}} = 20$ ]
	{
		\includegraphics[trim= 3cm 0cm 0cm 0cm,clip, scale=0.17]{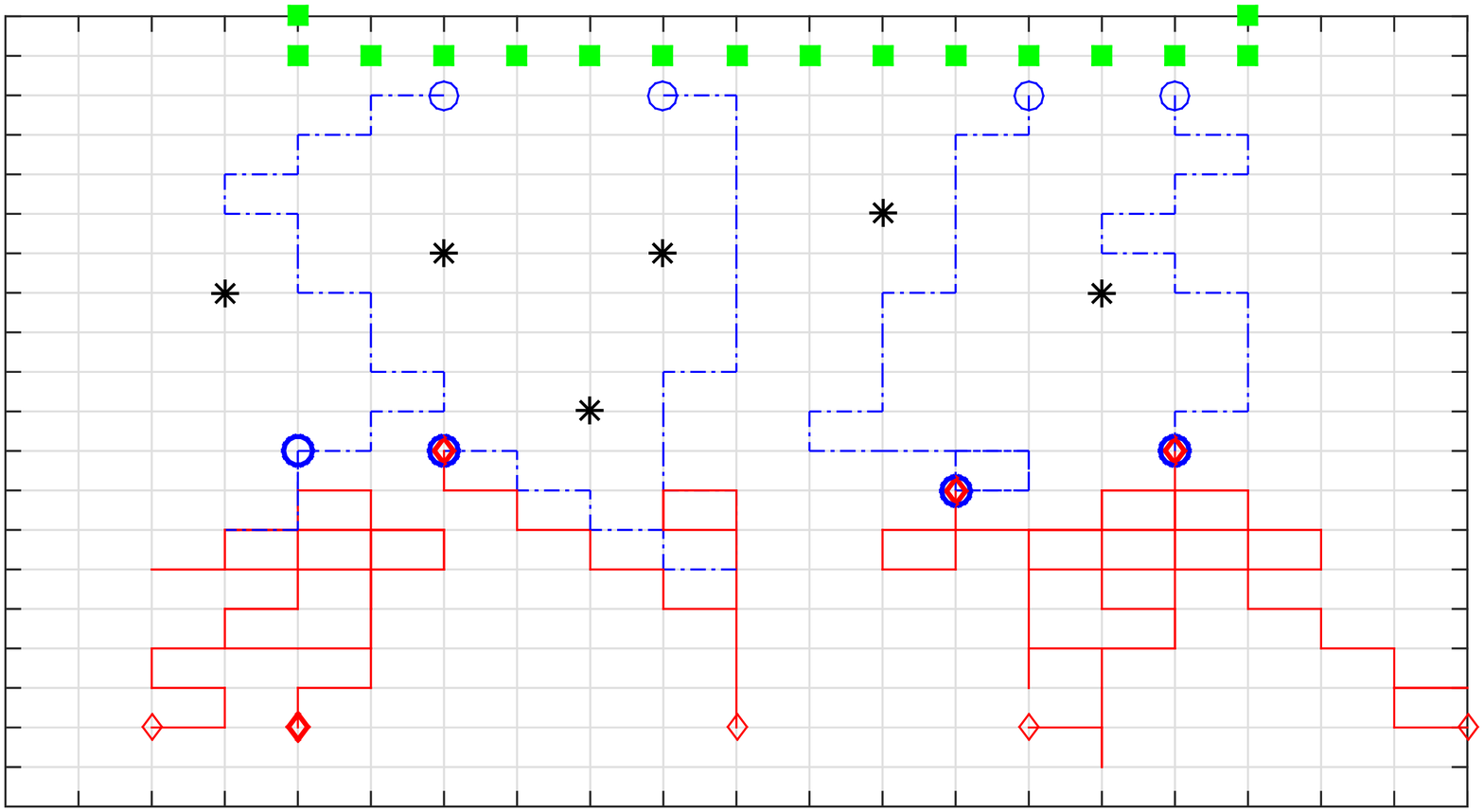}\label{fig:sim}
	}
	\subfigure[$\delta_{\te{th}} = 15$.  ]
	{
		\includegraphics[trim= 3cm 0cm 0cm 0cm,clip, scale=0.17]{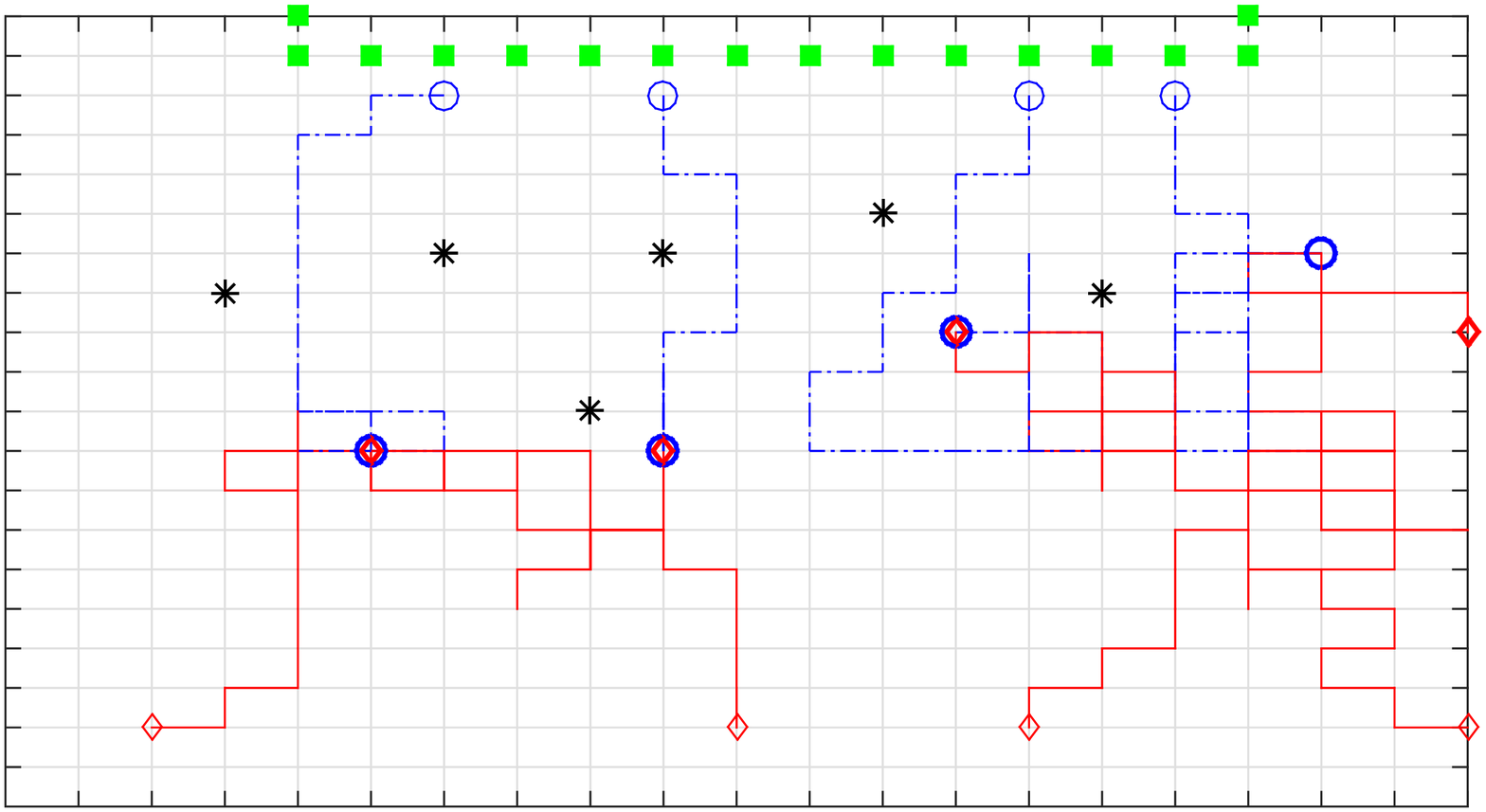}\label{fig:sim1}
	}\\
	\subfigure[$\delta_{\te{th}} = 10$.  ]
	{
		\includegraphics[trim= 3cm 0cm 0cm 0cm,clip, scale=0.17]{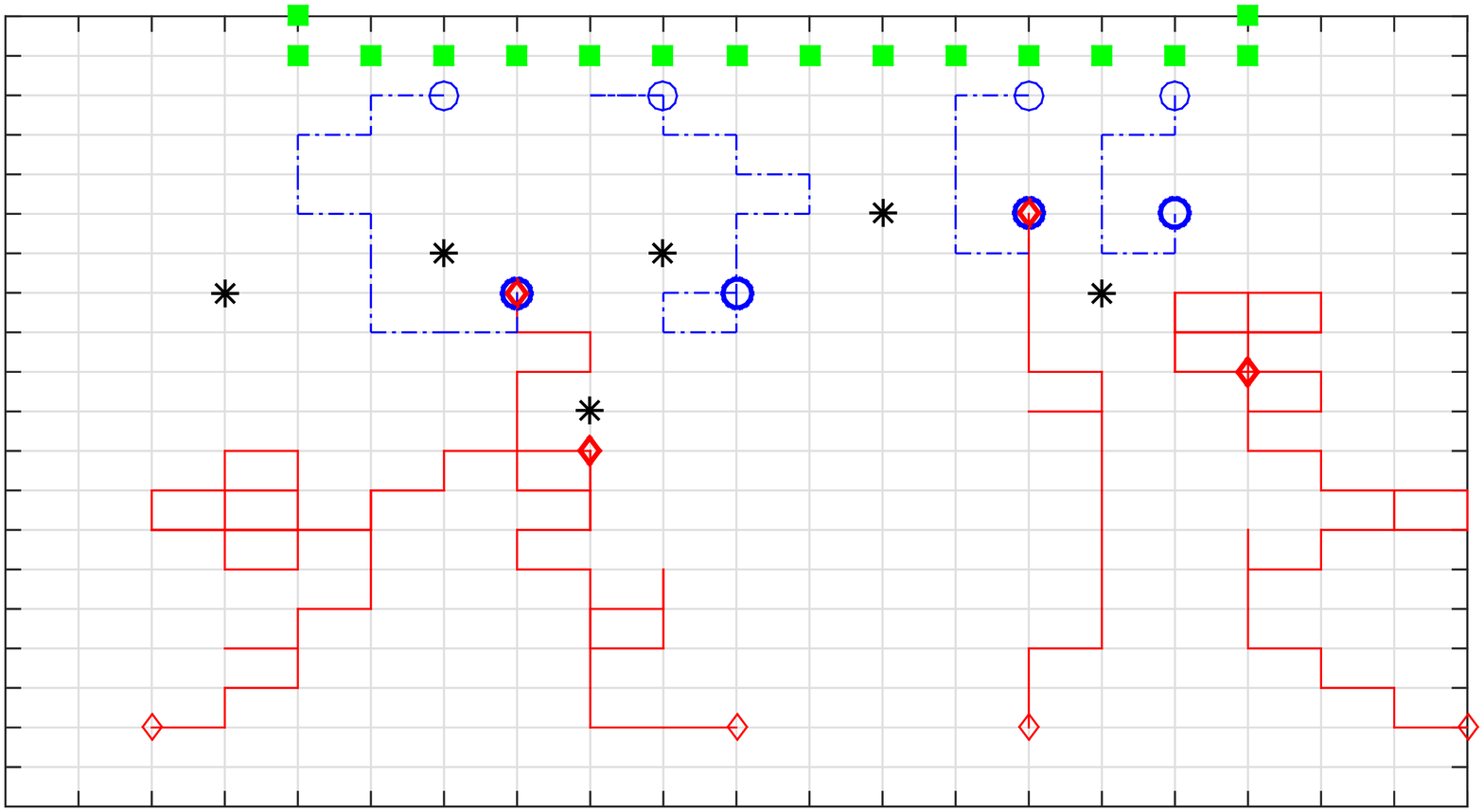}\label{fig:sim2}
	}
	\subfigure[$\delta_{\te{th}} = 5$.  ]
	{
		\includegraphics[trim= 3cm 0cm 0cm 0cm,clip, scale=0.17]{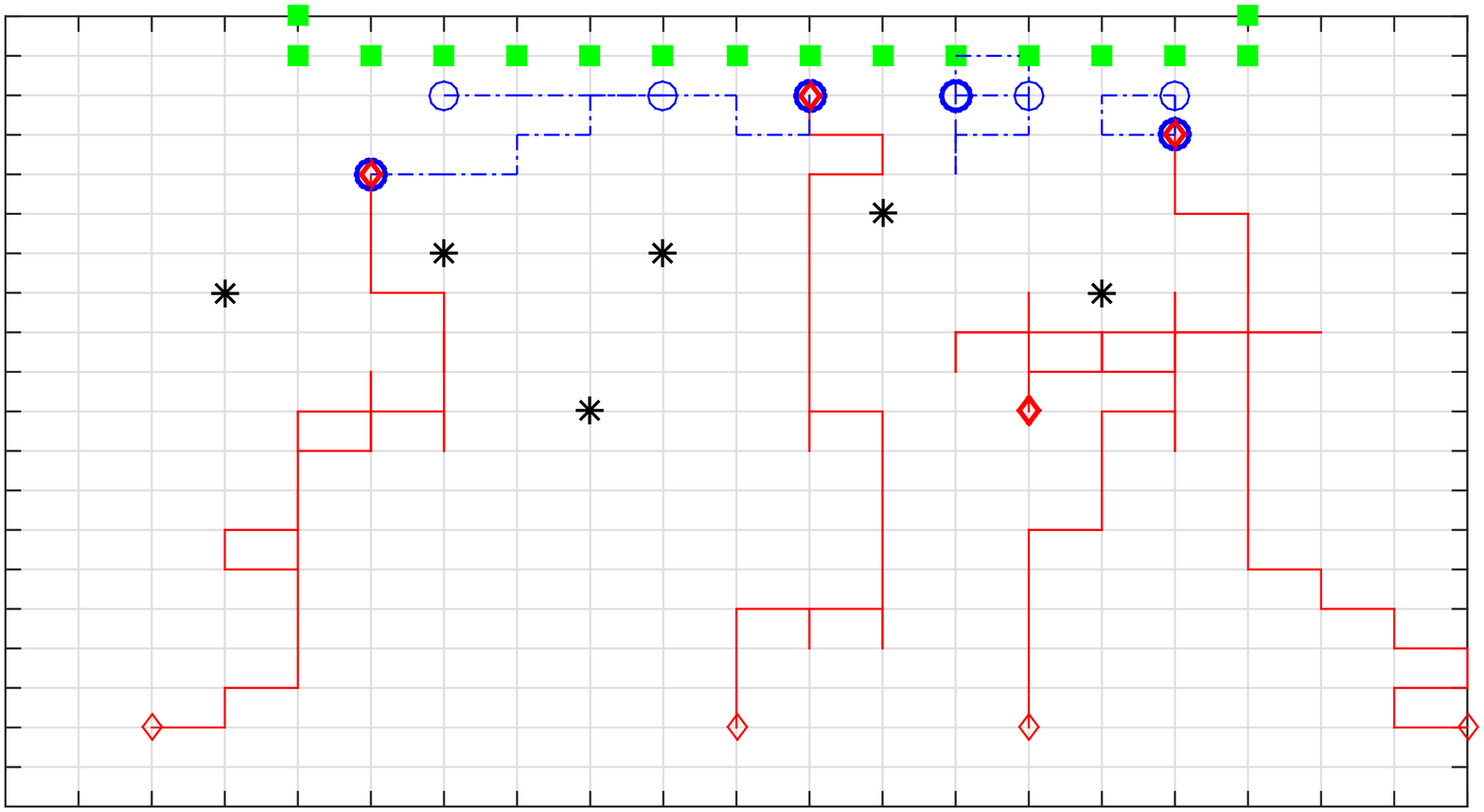}\label{fig:sim3}
	}
	\subfigure[$\delta_{\te{th}} = (20,8,8,20)$ ]
	{
		\includegraphics[trim= 3cm 0cm 0cm 0cm,clip, scale=0.17]{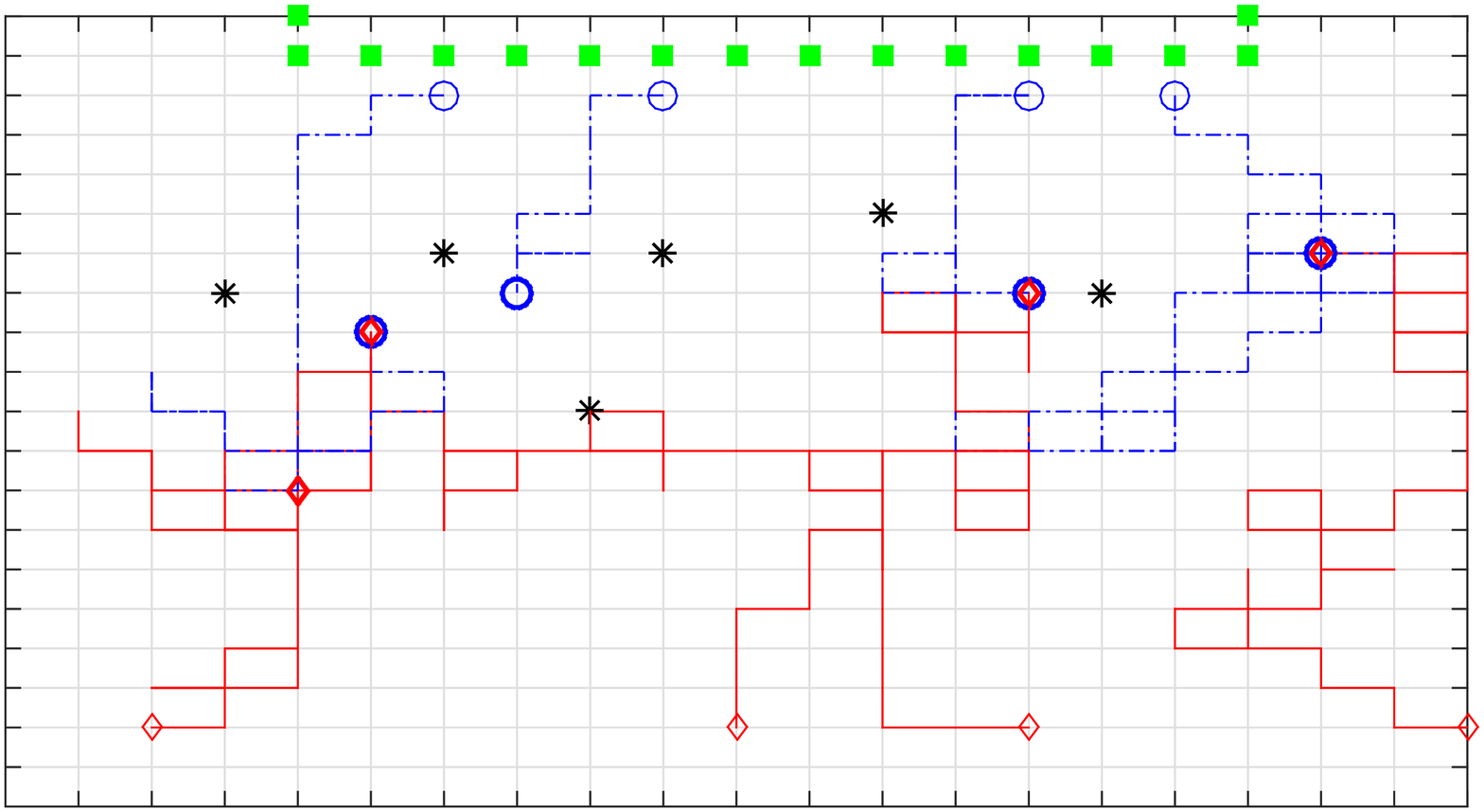}\label{fig:sim4}
	}
	\caption{Layout of the playing arena and trajectories of the offense and defense teams for different values of $\delta_{\te{th}}$ in  (\ref{eq:alpha}).} 
	\label{fig:Simulations}
\end{figure*}
The detailed layout of the field with the locations of the defense zone, attacker, defenders, and the obstacles is presented in Fig. \ref{fig:Layout}. 
The defense zone is the set of squares at the top of the field. The obstacles are represented by asterisks, attackers by diamonds, and defenders by circles. The responsibility set of each defender $d_i$ was $D_i$ and is shown in the figure. 

The parameters in $J^{\te{avoid}}_i$ were set as $\zeta_1 = 200$ and $\zeta_2 = 5$. The weights in (\ref{eq:weightAttacker}) for $J^a_i$ was $c =20$ for each defender. For cohesion among the defenders in $J^d_i$, the following weights were used
\[ \small
 W^d   =
\begin{bmatrix}
         0.0 & 0.5 & 0.1 & 0.01\\ 
          0.5 & 0.0 & 0.1 & 0.01 \\
          0.01 & 0.1& 0.0 & 0.5 \\ 
          0.01&  0.1 & 0.5  &0.0 
\end{bmatrix}
\]
where $W^d_{ij} = w^d_{ij}$. 
To switch between attacking and defense modes, we selected $\alpha_{\te{nom}}^f = 0.9$ and  $\alpha_{\te{nom}}^f = 0.1$ and $\beta = 0.7$. The simulations were performed with Manhattan distance for the function $d(z_i,z_j)$ as defined in (\ref{eq:distance}) and for four different values of threshold distance, $\delta_{\te{th}} \in  \{5,10,15,20\}$. We also simulate a scenario with different value of $\delta_{\te{th}}$ for each defender.

The defenders assumed that the attackers always try to minimize their distance form the defense zone. However, the actual strategy of the attackers was based on feedback that depended on their distance from the defenders. Each attacker had two basic modes: attack base and avoid defender. It adjusted the weights assigned to each of these modes depending on its minimum distance from the defenders. 

At each decision time, attacker $i$ computed two positions. To enter the defense zone, it computed the location in its neighborhood that minimized its distance from the defense zone. To avoid defenders, it also computed the location that maximized its distance from the nearest defender. Let $\eta_{i,\te{base}}$ be the weight assigned to attack base mode and $\eta_{i,\te{avoid}}$ be the weight assigned to avoid defender mode. Let $\eta_{\te{base}}^{\te{nom}}$ and $\eta_{\te{avoid}}^{\te{nom}}$ be the nominal values if the minimum distance between an attacker and the defenders was equal to some threshold value $\Delta_{\te{th}}$. Let $\Delta^a_i(k)$ be the minimum distance between attacker $i$ and the defenders at time $k$. Then 

\begin{align*}
\eta_{i,\te{avoid}}(k) &= \frac{\eta_{\te{avoid}}^{\te{nom}}e^{\kappa(\Delta_{\te{th}} - \Delta^a_i(k)}}{\eta_{\te{base}}^{\te{nom}} + \eta_{\te{avoid}}^{\te{nom}}e^{\kappa(\Delta_{\te{th}} - \Delta^a_i(k)}},\\
\eta_{i,\te{base}}(k) &= \frac{\eta_{\te{base}}^{\te{nom}}}{\eta_{\te{base}}^{\te{nom}} + \eta_{\te{avoid}}^{\te{nom}}e^{\kappa(\Delta_{\te{th}} - \Delta^a_i(k)}}.
\end{align*}
where $\kappa \in [0,1]$ is a constant value. If $\Delta^a_i(k) < \Delta_{\te{th}}$, the value of $\eta_{i,\te{avoid}}(k)$ increases because a defender is closer than the threshold value. However, as $\Delta^a_i(k)$ increases, $\eta_{i,\te{avoid}}(k)$ keeps on decreasing. Thus, at each decision time $k$, the attacker decides to attack the base with probability $\eta_{i,\te{base}}(k)$ and avoid defenders with probability $\eta_{i,\te{avoid}}(k)$. In the simulation setup, we selected $\eta_{\te{avoid}}^{\te{nom}} = 0.7$, $\eta_{\te{base}}^{\te{nom}} = 0.3$, $\Delta_{\te{th}} = 4$ and $\kappa = 0.9$. 

%In Fig. \ref{fig:CostFunction}, the cost $J_1(\te{z}^{d},\te{u}^d_z,(\te{z}^a)^+,\te{z}^f) $ of $d_1$ is demonstrated. For this figure, the location $z^d_1$ of $d_1$ is varied along the entire grid while keeping the locations of all the other attackers and defenders fixed at the initial locations as per the setup described above. The isolated peaks correspond to the locations of the obstacles and the crests correspond to high potential for collision avoidance. 

Finally, for the distributed optimization algorithm, we assume that the communication network topology is a line graph and the adjacency matrix has the following structure. 
\[ \small
 A  = 
\begin{bmatrix}
         0.7 & 0.3  & 0 & 0\\ 
          0.3 & 0.6 & 0.1 & 0 \\
          0 &0.1 & 	0.6 & 0.3  \\ 
          0 & 0 &  0.3 & 0.7 \\
\end{bmatrix}
\]
The distributed subgradient algorithm was executed for $\te{iter} = 20$ iterations with $\gamma =0.1 $ and $\hat{t} = 0.7$. The simulation results are presented for four values of $\delta_{\te{th}}$, which controlled the transition of defenders behavior from defense to attack in  (\ref{eq:alpha}). In all the simulations, the behavior of the attackers was aligned with the values set for $\eta_{\te{avoid}}^{\te{nom}}$ and $\eta_{\te{base}}^{\te{nom}}$. The attackers had more emphasis on avoiding the defenders than capturing the base. Consequently, none of the attackers could enter the defense zone. However, the defenders were unable to capture all the attackers as well.

The effect of decreasing the value of $\delta_{\te{th}}$ can be observed by comparing the trajectories in Figs. \ref{fig:sim}-\ref{fig:sim3}. With $\delta_{\te{th}} = 20$, the behavior of the defense team was set to be attacking, which is evident form Fig. \ref{fig:sim}. The defenders left the base in pursuit of the attackers and were able to capture three of them. As $\delta_{\te{th}}$ is reduced, the defensive behavior becomes more and more prominent. In Fig. \ref{fig:sim1} for $\delta_{\te{th}}=15$, the defenders left the base area in pursuit of the attackers but were a little restrictive then the case with $\delta_{\te{th}} = 20$ in Fig. \ref{fig:sim}. The behavior of the defenders became more restrictive in Fig. \ref{fig:sim2} when $\delta_{\te{th}} = 10$. 
With $\delta_{\te{th}} = 5$, the behavior of the defenders was set to be defensive. Therefore, we can observe from Fig. \ref{fig:sim3} that all the defenders remained close to their assigned base area. Finally, we simulated the game with different $\delta_{\te{th}}$ for each defenders. From Fig. \ref{fig:sim4}, we can observe that the defenders at the flanks were attacking and the center players were more defensive and guarded the defense zone. 

In all the simulations, we can observe that the defenders managed to avoid collisions among themselves and with the obstacles. Thus, the proposed framework for the distributed minimization of submodular functions generated effective trajectories for our problem even though our problem was defined over partially ordered sets. 

%It can be observed from the simulation results that the attackers had more emphasis on avoiding the defenders than capturing the base. Consequently, non of the attackers could enter the defense zone. However, the defenders were unable to capture all the attackers as well. As a result, one attacker was captured and the other attackers were pushed further away from the defense zone. 
\begin{comment}
\begin{figure}[t] 
	\centering
	\includegraphics[trim= 3cm 0cm 0cm 0cm,clip, scale=0.22]{dth_different}
	\caption{Team trajectories with different $\delta_{\te{th}} =[20,~8,~8,~20]$.}
	\label{fig:Layout}
\end{figure}
\end{comment}
\section{conclusion}
We presented a framework for the distributed minimization of submodular functions over lattices of chain products. For this framework, we established a novel connection between a particular convex extension of submodular functions and distributed optimization of convex functions. This connection proved to be effective because that convex extension of submodular functions could be computed efficiently in polynomial time. Furthermore, the solution to the original submodular problem was directly related to the solution of the equivalent convex problem. 

We also proposed a novel application domain for submodular function minimization, which is distributed motion coordination over discrete domains. We demonstrated through an example setup that we can design potential fields over state space based on submodular functions. We showed that we can achieve certain desired behaviors like cohesion, go to goal, collision avoidance, and obstacle avoidance by driving the agents towards the minima of these potential fields. Finally, we verified through simulations that the proposed framework for distributed submodular minimization can efficiently minimize these submodular potential fields online in a distributed manner.

\bibliography{IEEEabrv,CDC2017}
%\end{thebibliography}

\end{document}